\newtheorem{thm}{Theorem}
\newtheorem{lem}[thm]{Lemma}
\newtheorem{conj}[thm]{Conjecture}
\theoremstyle{definition}
\numberwithin{equation}{section}
\g@addto@macro\bfseries{\boldmath}
\def\eref#1{$(\ref{#1})$}
\def\sref#1{\S$\ref{#1}$}
\def\lref#1{Lemma~$\ref{#1}$}
\def\tref#1{Theorem~$\ref{#1}$}
\newcommand{\Tr}{\mathop{\mathrm{Tr}}}
\renewcommand{\geq}{\geqslant}
\renewcommand{\leq}{\leqslant}
\renewcommand{\ge}{\geqslant}
\renewcommand{\le}{\leqslant}
\def\F{\mathbb{F}}
\title{Degree of Orthomorphism Polynomials\\ over Finite Fields}
\author{Jack Allsop \ \ Ian M. Wanless\thanks{Research supported by ARC grant DP150100506.}\\
\small School of Mathematics\\[-0.5ex]
\small Monash University\\[-0.5ex]
\small Vic 3800, Australia\\
\small\tt jall0007@student.monash.edu \ \ ian.wanless@monash.edu}
\date{}
\begin{document}

\maketitle

\begin{abstract}
An \emph{orthomorphism} over a finite field $\F_q$ is a permutation
$\theta:\F_q\mapsto\F_q$ such that the map $x\mapsto\theta(x)-x$ is
also a permutation of $\F_q$.  The \emph{degree} of an orthomorphism
of $\F_q$, that is, the degree of the associated reduced permutation
polynomial, is known to be at most $q-3$. We show that this upper
bound is achieved for all prime powers $q\notin\{2, 3, 5, 8\}$. We do
this by finding two orthomorphisms in each field that differ on
only three elements of their domain. Such orthomorphisms can be used
to construct $3$-homogeneous Latin bitrades.

\medskip

\noindent Keywords: orthomorphism, cyclotomic, permutation polynomial,
homogeneous Latin bitrades.
\end{abstract}

\section{Introduction}

It is well known that any map $\phi:\F_q\rightarrow \F_q$ can be
expressed uniquely as a polynomial $f \in \F_q[x]$ of degree less than
$q$. We say that $f$ is the \emph{reduced polynomial} corresponding to $\phi$
and that the \emph{reduced degree} of $\phi$ is the degree of $f$.
A polynomial $f \in \F_q[x]$ is a \emph{permutation polynomial} if the map $x \mapsto f(x)$ is a permutation of $\F_q$. For $q>2$ it is well known that the reduced degree of a permutation polynomial is at most $q - 2$, and using Lagrange interpolation it is easily verified that any transposition has reduced degree exactly $q - 2$.

A permutation polynomial $f \in \F_q[x]$ is an \emph{orthomorphism polynomial} if the map $x \mapsto f(x) - x$ is also a permutation of $\F_q$. Orthomorphisms have many applications in design theory, especially to Latin squares \cite{evans2018orthogonal,wanless2004diagonally}. The following theorem was proven by Niederreiter and Robinson \cite{niederreiter1982complete} for fields of odd characteristic, and by Wan \cite{wan1986problem} for fields of even characteristic. 

\begin{thm}\label{t:reddegorth}
For $q>3$ any orthomorphism polynomial over $\F_q$ has reduced
degree at most $q - 3$.
\end{thm}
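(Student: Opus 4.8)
The plan is to prove that if $f$ is an orthomorphism polynomial over $\F_q$ with $q>3$, then the coefficient of $x^{q-2}$ in the reduced form of $f$ vanishes; combined with the classical bound $\deg f\le q-2$ for permutation polynomials (valid for $q>2$, and recalled above) this gives $\deg f\le q-3$. Two standard facts drive the argument. First, for $g\in\F_q[x]$ with reduction $\bar g$ modulo $x^q-x$, the coefficient of $x^{q-1}$ in $\bar g$ equals $-\sum_{c\in\F_q}g(c)$ and the coefficient of $x^{q-2}$ in $\bar g$ equals $-\sum_{c\in\F_q}c\,g(c)$, both because $\sum_{c\in\F_q}c^{i}$ is $-1$ when $(q-1)\mid i$ and $i>0$, and $0$ otherwise. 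Second, if $g$ is a permutation polynomial then $\sum_{c}g(c)^t=\sum_{c}c^t$ for every $t\ge0$, since $g$ takes each value of $\F_q$ once. (The second fact with $t=1$ and $q>2$ already re-proves $\deg f\le q-2$.)

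Write $\theta=f$ and $\psi(x)=f(x)-x$; both are permutation polynomials, and for $q\ge4$ they have the same coefficient $a$ of $x^{q-2}$ (subtracting $x$ does not touch that term). We want $a=0$, i.e.\ $\sum_{c}c\,\theta(c)=0$. In odd characteristic this is immediate:
\[
\sum_{c\in\F_q}\psi(c)^2=\sum_{c}\bigl(\theta(c)-c\bigr)^2=\sum_{c}\theta(c)^2-2\sum_{c}c\,\theta(c)+\sum_{c}c^2.
\]
Applying the second fact to the permutation polynomials $\psi$ and $\theta$, the left-hand side and $\sum_c\theta(c)^2$ both equal $\sum_c c^2$, which is $0$ because $q>3$ forces $(q-1)\nmid2$. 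Hence $2\sum_c c\,\theta(c)=0$, so $\sum_c c\,\theta(c)=0$ in odd characteristic, giving $\deg f\le q-3$.

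In characteristic $2$ the cross term $2\sum_c c\,\theta(c)$ vanishes and this argument collapses; this is the step I expect to be the genuine obstacle, and it is precisely the case that required Wan's separate treatment. Here $\psi(c)=\theta(c)+c$, so $\mathrm{id},\theta,\psi$ are three permutations of $\F_q$ with $c+\theta(c)+\psi(c)=0$; using $\sum_c\theta(c)^2=\sum_c\psi(c)^2=\sum_c c^2=0$ one finds $\sum_c c\,\theta(c)=\sum_c c\,\psi(c)=\sum_c\theta(c)\,\psi(c)$, so no pairing of two of these permutations can isolate $a$, and the higher identities $\sum_c\theta(c)^t=\sum_c\psi(c)^t=\sum_c c^t$ also tend to become vacuous once $(\theta(c)+c)^t$ is expanded, since the Frobenius annihilates the informative binomial cross terms. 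The route I would pursue is to exploit Hermite's criterion in full for both $\theta$ and $\psi$: for each exponent $t$ coprime to $2$, the two permutation conditions impose a polynomial relation among the coefficients $a,a_{q-3},a_{q-4},\dots$ of $\theta$, and one would have to show that this whole system, together with the identity $c+\theta(c)+\psi(c)=0$, is inconsistent unless $a=0$. Extracting the relation that forces $a=0$ out of this tangle of quadratic and higher relations is, I believe, the real difficulty of the even-characteristic case; once $a=0$ is known, $\deg f\le q-3$ follows at once, uniformly in the characteristic.
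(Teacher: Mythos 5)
The paper offers no proof of this theorem at all: it is imported verbatim from Niederreiter and Robinson (odd characteristic) and Wan (even characteristic), so the only thing to compare your attempt against is the cited literature. Your odd-characteristic argument is correct and complete, and it is essentially the Niederreiter--Robinson proof: the coefficient of $x^{q-2}$ in the reduced polynomial of $f$ is $-\sum_{c}c\,f(c)$; expanding $\sum_{c}(f(c)-c)^2$ and using that both $f$ and $x\mapsto f(x)-x$ are permutations, together with $\sum_{c}c^2=0$ (which needs exactly $q>3$), yields $2\sum_{c}c\,f(c)=0$, and dividing by $2$ kills the $x^{q-2}$ coefficient. Combined with the standard bound $\deg f\le q-2$ for permutation polynomials with $q>2$, this settles all odd $q>3$.

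The characteristic-$2$ case, however, is a genuine gap, and you have correctly diagnosed it as such yourself. Everything after ``the route I would pursue'' is a description of a strategy rather than an argument: you produce no specific exponent $t$, derive no relation forcing $a=0$, and your own computation that $\sum_{c}c\,\theta(c)=\sum_{c}c\,\psi(c)=\sum_{c}\theta(c)\,\psi(c)$ establishes only that the quadratic (degree-$2$ power-sum) identities cannot isolate $a$ --- it gives no evidence that the cubic or higher Hermite relations can, and in characteristic $2$ most of those relations do indeed degenerate under Frobenius, as you note. This is precisely the subcase the paper attributes to Wan's separate paper \cite{wan1986problem}, and Wan's treatment is not a routine extension of the quadratic trick. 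As written, your proposal therefore proves the theorem only for odd $q>3$; for $q=2^r>3$ the claim remains unproved.
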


Our first goal is to establish when the bound in \tref{t:reddegorth}
is achieved. 
It was known \cite{shallue2013permutation} that the bound in \tref{t:reddegorth} is not achieved when $q\in\{2, 3, 5, 8\}$.
We show:

\begin{thm}\label{t:bndach}
There exists an orthomorphism polynomial of degree $q - 3$ over $\F_q$ if and only if $q \notin \{2, 3, 5, 8\}$.
\end{thm}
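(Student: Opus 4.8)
The ``only if'' direction needs nothing new: for $q\in\{2,3\}$ there is no permutation polynomial of degree $q-3\le 0$, and for $q\in\{5,8\}$ the required nonexistence is precisely what is proved in \cite{shallue2013permutation}. For the ``if'' direction the plan is to exhibit, for every $q\notin\{2,3,5,8\}$, two orthomorphisms of $\F_q$ that coincide on $q-3$ elements and differ on the remaining three. This suffices by the following reduction: if $\theta_1,\theta_2$ are orthomorphisms of $\F_q$ with reduced polynomials $f_1,f_2$ and they differ on exactly three elements, then $f_1-f_2$ is a nonzero polynomial vanishing at $q-3$ points, so $\deg(f_1-f_2)\ge q-3$, while $\deg f_1,\deg f_2\le q-3$ by \tref{t:reddegorth}; hence $\max(\deg f_1,\deg f_2)=q-3$, and in particular a base orthomorphism of reduced degree $<q-3$ forces the other one to have reduced degree exactly $q-3$.

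The construction starts from a base orthomorphism $\phi$ of $\F_q$ and perturbs it on three points. Given distinct $a,b,c\in\F_q$, let $\theta$ agree with $\phi$ off $\{a,b,c\}$ and set $\theta(a)=\phi(b)$, $\theta(b)=\phi(c)$, $\theta(c)=\phi(a)$. Then $\theta$ is automatically a bijection, and one checks that $\theta$ is an orthomorphism differing from $\phi$ on exactly $\{a,b,c\}$ if and only if
\[
\phi(b)-\phi(c)=a-c\qquad\text{and}\qquad\phi(c)-\phi(a)=b-a
\]
(the third cyclic identity $\phi(a)-\phi(b)=c-b$ then follows). So the task reduces to producing, for each $q$, a base orthomorphism of reduced degree $<q-3$ together with a distinct triple $a,b,c$ satisfying these two equations.

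I would split according to whether $x^2-x+1$ has a root in $\F_q$, i.e.\ whether $q\not\equiv 2\pmod 3$. (i) If such a root $\alpha$ exists (note $\alpha=-1$ when $3\mid q$), take $\phi(x)=\alpha x$; since $\alpha\notin\{0,1\}$ this is an orthomorphism, and for any $u\ne 0$ and any $c$ the triple $(c+\alpha u,\,c+u,\,c)$ is distinct and satisfies the two equations because $\alpha^2-\alpha+1=0$ (when $q=4$ one may instead simply note $\phi$ itself has degree $q-3=1$). (ii) If $q\equiv2\pmod 3$ is not prime, say $q=p^f$ (then necessarily $f$ is odd and $f\ge3$), and $q\ne 8$: take $\phi$ to be an $\F_p$-linear map of $\F_q$ with characteristic polynomial $(x^2-x+1)h(x)$ for a monic $h$ of degree $f-2$ with $h(0)h(1)\ne0$ (such $h$ exists since $f-2\ge 1$, and since $p\ge 3$ in the case $f=3$). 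Then $\phi$ has no eigenvalue $0$ or $1$, so $\phi$ and $x\mapsto\phi(x)-x$ are bijections (hence $\phi$ is an orthomorphism), and $\phi$, being $\F_p$-linear, has reduced degree at most $p^{f-1}<q-3$; moreover $\phi^2-\phi+\mathrm{id}$ is singular, and for $0\ne u\in\ker(\phi^2-\phi+\mathrm{id})$ and any $c$ the triple $(c+\phi(u),\,c+u,\,c)$ works. (iii) If $q\equiv2\pmod3$ is prime (so $q\ge11$): take for $\phi$ a cyclotomic orthomorphism of index $2$, i.e.\ $\phi(x)=\alpha x$ on nonzero squares and $\beta x$ on nonsquares, with $\phi(0)=0$; this is an orthomorphism exactly when $\alpha,\beta$ lie in one quadratic class, $\alpha-1,\beta-1$ lie in one quadratic class, $0,1\notin\{\alpha,\beta\}$, and, imposing $\alpha\ne\beta$, one has $\deg\phi=(q+1)/2<q-3$. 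Searching for a triple with $a,b$ square and $c$ nonsquare, the two equations force $a/b=(1-\beta+\alpha\beta)/(1-\alpha+\alpha\beta)$ and $c=\alpha a+(1-\alpha)b$, reducing matters to finding an admissible pair $(\alpha,\beta)$ for which $a/b$ and $c/b$ land in the required quadratic classes and $a,b,c$ come out distinct.

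The substance of the argument is case (iii), and this is the step I expect to be the main obstacle. I would count the admissible pairs $(\alpha,\beta)$ and the ``good'' ones via a two-variable quadratic character sum in $(\alpha,\beta)$; the count of good pairs should come out as $\tfrac1{16}q^2+O(q^{3/2})$, hence positive once $q$ exceeds an explicit bound, and the finitely many remaining primes $q\equiv2\pmod3$ with $q>8$ below that bound would be dispatched by direct computation. The delicate points are obtaining a clean enough character-sum estimate — in particular isolating and discarding the pairs where the construction degenerates ($\alpha=\beta$, a vanishing denominator, or two of $a,b,c$ colliding) — and keeping the exceptional range small enough for the finite check to be routine; if some borderline $q$ resist, one can fall back on the other quadratic-class patterns for $(a,b,c)$ or on the opposite three-cycle $a\mapsto\phi(c),\,b\mapsto\phi(a),\,c\mapsto\phi(b)$.
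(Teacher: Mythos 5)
Your overall reduction is exactly the paper's: produce two orthomorphisms at Hamming distance $3$ and conclude from \tref{t:reddegorth} that one of the two reduced polynomials has degree $q-3$ (this is \lref{l:polydiff} in the paper), and your criterion for the cyclic three-point perturbation, namely $\phi(b)-\phi(c)=a-c$ and $\phi(c)-\phi(a)=b-a$, is correctly derived. Cases (i) and (ii) are correct and genuinely different from the paper's constructions: the observation that a linear orthomorphism $\phi$ with $\phi^2-\phi+\mathrm{id}$ singular admits the triple $(c+\phi(u),\,c+u,\,c)$ for $0\ne u\in\ker(\phi^2-\phi+\mathrm{id})$ handles all $q\not\equiv2\bmod 3$ and all non-prime $q\equiv2\bmod3$ other than $8$ in one stroke, where the paper uses four separate constructions plus an ad hoc computation for $q=125$.

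The problem is case (iii), the primes $q\equiv2\bmod3$ with $q\ge11$ --- an infinite family --- which as written is a plan rather than a proof. You have not established the two-variable character-sum estimate, not isolated the degenerate pairs $(\alpha,\beta)$ (where $\alpha=\beta$, a denominator vanishes, or two of $a,b,c$ collide), not extracted an explicit threshold beyond which the main term $\tfrac1{16}q^2$ dominates the error, and not carried out the finite verification below that threshold; you say yourself that this ``is the step I expect to be the main obstacle.'' Consequently the argument does not actually cover $q=11,17,23,\dots$. This gap is avoidable: the paper disposes of every prime field by quoting the result of Cavenagh and Wanless \cite{cavenagh2010number} that orthomorphisms of $\mathbb{Z}_p$ at Hamming distance $3$ exist for all primes $p\notin\{2,5\}$ (and for large odd $q$ one can alternatively route through \tref{t:parorth} and \lref{l:partialorth}). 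Either cite that result for your case (iii), or carry the Weil-bound computation and the finite search to completion; until one of these is done, the proof is incomplete.
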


We define the \emph{Hamming distance} $H(f,g)$ between two polynomials
$f,g\in\F_q[x]$ by $H(f,g)=\big|\{a\in\F_q:f(a)\ne g(a)\}\big|$.
For two distinct permutations $f,g$ it is obvious that $H(f,g)\ge2$.
If $f(a)\ne f(b)$ for $a\ne b$, then $f(a)-b\ne f(a)-a\ne f(b)-a$.
It follows that if $f,g\in\F_q[x]$ are distinct orthomorphism
polynomials then $H(f,g)\ge3$. We investigate when this bound is tight. 
Our second main result is as follows:

\begin{thm}\label{t:Ham3}
There exist orthomorphism polynomials $f,g\in\F_q[x]$ that satisfy
$H(f,g)=3$ if and only if $q \notin \{2, 5, 8\}$.
\end{thm}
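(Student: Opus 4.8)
The plan is to recast the condition $H(f,g)=3$ as a statement about a single orthomorphism, and then to supply, for each admissible $q$, an orthomorphism of one of three explicit shapes: linear, additive, or cyclotomic of index $2$. To begin, suppose $f,g$ are distinct orthomorphism polynomials with $H(f,g)=3$, and let $S=\{a,b,c\}$ be the set on which they differ. Since $f$ and $g$ are permutations agreeing off $S$, they restrict to bijections of $S$ with a common image, so $f=g\circ\pi$ on $S$ for a fixed-point-free permutation $\pi$ of the three-element set $S$, that is, a $3$-cycle; applying the same reasoning to the permutations $f-\mathrm{id}$ and $g-\mathrm{id}$ produces a second $3$-cycle $\rho$ with $f(x)-x=g(\rho(x))-\rho(x)$ on $S$, and a comparison forces $\rho=\pi^{-1}$. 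Writing $\pi=(a\,b\,c)$ and unwinding, the whole situation is equivalent to the existence of $K\in\F_q$ with $g(a)=K-b$, $g(b)=K-c$ and $g(c)=K-a$; that is, the permutation $\sigma_{g,K}\colon x\mapsto K-g(x)$ contains the $3$-cycle $a\to b\to c\to a$. Conversely, whenever $g$ is an orthomorphism and some $\sigma_{g,K}$ has a cycle of length exactly $3$, these formulas define an orthomorphism $f$ with $H(f,g)=3$. So it suffices to exhibit, for every $q\notin\{2,5,8\}$, an orthomorphism $g$ of $\F_q$ and a $K\in\F_q$ for which $\sigma_{g,K}$ has a $3$-cycle, and to rule this out for $q\in\{2,5,8\}$.

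Consider first $q\not\equiv2\pmod3$. If $3\mid q-1$, choose $\gamma\in\F_q$ with $\gamma^2-\gamma+1=0$ (a primitive sixth root of unity, so $\gamma\notin\{0,1,-1\}$). Then $g(x)=\gamma x$ is an orthomorphism, and $\sigma_{g,K}(x)=K-\gamma x$ is an invertible affine map with a unique fixed point, hence conjugate to $y\mapsto-\gamma y$; since $(-\gamma)^3=-\gamma^3=1$ while $-\gamma\ne1$, the latter has order $3$, so $\sigma_{g,K}$ has $(q-1)/3\ge1$ cycles of length $3$. If instead $\mathrm{char}(\F_q)=3$, take $g(x)=-x$, an orthomorphism because $g(x)-x=x$; then $\sigma_{g,K}(x)=K+x$ is a translation, and for $K\ne0$ every one of its cycles has length $3$. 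Together these handle all $q\not\equiv2\pmod3$, in particular $q=3$, and none of these $q$ lies in $\{2,5,8\}$.

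Now let $q\equiv2\pmod3$ with $q\notin\{2,5,8\}$, and write $q=p^m$, so that $p\equiv2\pmod3$ and $m$ is odd. If $m\ge3$, use an additive orthomorphism: choose an $\F_p$-linear bijection $g$ of $\F_q$ whose characteristic polynomial over $\F_p$ is $\Phi(x)\,r(x)$, where $\Phi$ is the irreducible degree-$2$ minimal polynomial of a primitive sixth root of unity — namely $x^2-x+1$, or $x^2+x+1$ in characteristic $2$ — and $r$ has degree $m-2$ with no root in $\{0,1,-1\}$ (take $r$ irreducible when $\deg r\ge2$, and $r=x-a$ with $a\notin\{0,1,-1\}$ when $\deg r=1$, which is possible since then $p\ge5$). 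Because $\Phi$ and $r$ are nonzero at each of $0$, $1$ and $-1$, the maps $g$, $g-\mathrm{id}$ and $g+\mathrm{id}$ are all bijective; thus $g$ is an orthomorphism and each $\sigma_{g,K}$ has a fixed point, so $\sigma_{g,K}$ is conjugate to $-g$. Since $\Phi$ (being coprime to $r$) occurs with multiplicity one in the characteristic polynomial, the $\Phi$-primary subspace is two-dimensional, and $-g$ acts on it with order $3$ and no nonzero fixed vector; hence $-g$, and therefore $\sigma_{g,K}$, has a $3$-cycle. (Among prime powers $q=p^m\equiv2\pmod3$ with $m\ge3$ this construction fails only at $q=8$, where no admissible $r=x-a$ exists over $\F_2$.) If $m=1$, so $q=p$ is a prime $\equiv5\pmod6$ with $p\ge11$, use an index-$2$ cyclotomic orthomorphism: multiply the nonzero squares by $A$, the nonsquares by $B$, and fix $0$; this is an orthomorphism precisely when $A,B\notin\{0,1\}$, $\chi(A)=\chi(B)$ and $\chi(A-1)=\chi(B-1)$, where $\chi$ is the quadratic character. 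Having fixed $A$ and $B$, a computation shows that — after choosing the square/nonsquare class of $K$ suitably — $\sigma_{g,K}$ acquires a $3$-cycle of a prescribed class-pattern exactly when a short list of polynomials in $A$ and $B$, such as $1-B+AB$, $1-A+AB$ and $A^2-A+1$, take prescribed values of $\chi$. One is thereby reduced to solving a system of a few quadratic-character equations in $(A,B)\in\F_p^2$, and a Weil bound on the associated character sum guarantees a solution once $p$ exceeds an explicit threshold, the finitely many smaller primes being checked by computer. For the reverse implication: $\F_2$ has no orthomorphism at all, while for $q\in\{5,8\}$ one enumerates the finitely many orthomorphisms $g$ and checks that no $\sigma_{g,K}$ contains a $3$-cycle.

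The main obstacle is the prime case $p\equiv5\pmod6$ above: there no linear or additive device is available, so one must work inside the cyclotomic orthomorphisms and keep track of several quadratic-character conditions on the multipliers $A$ and $B$ at once, and the genuine work lies in making the Weil estimate effective enough to reduce matters to a genuinely finite, computer-verifiable list of primes.
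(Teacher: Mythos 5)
Your reduction of $H(f,g)=3$ to the existence of an orthomorphism $g$ and a constant $K$ for which $x\mapsto K-g(x)$ has a cycle of length exactly $3$ is correct (it is a symmetric repackaging of the paper's Lemma~\ref{l:partialorth}), and your constructions for $q\not\equiv 2\bmod 3$ (multiplication by a primitive sixth root of unity, or by $-1$ in characteristic $3$) and for $q=p^m$ with $m\ge 3$ odd and $p\equiv 2\bmod 3$ (an $\F_p$-linear map with characteristic polynomial $(x^2-x+1)r(x)$) check out in detail; they are more uniform than the paper's case split into Theorems~\ref{non25}, \ref{1mod3}, \ref{largeq} and \ref{t:oddtwo}, and they correctly isolate $q=8$ as the one failure of the linear-algebra device.

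The genuine gap is the case you yourself flag as ``the genuine work'': primes $p\equiv 5\bmod 6$ with $p\ge 11$, an infinite family. For these you only describe the shape of an argument --- an index-$2$ cyclotomic orthomorphism with multipliers $A,B$, a ``short list of polynomials \emph{such as} $1-B+AB$, \dots'' whose quadratic-character values must be prescribed, a Weil bound ``once $p$ exceeds an explicit threshold'', and a computer check of the remaining primes. None of this is carried out: the exact system of character conditions is not derived (and deriving it is delicate, since one must also force the three points to lie in the correct square/nonsquare classes so that the cyclotomic formula for $g$ applies to each of them), the number of conditions determines whether the Weil estimate actually yields a positive count, the threshold is not computed, and the finite verification is not done. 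As it stands the proof is complete only for $q\not\equiv 2\bmod 3$ and for proper prime-power $q\equiv 2\bmod 3$. The paper avoids this by quoting the prime-field case from Cavenagh and Wanless \cite{cavenagh2010number} (via Theorem~\ref{non25}); if you do not wish to cite that result, you must actually execute the character-sum argument, which is a substantial piece of work rather than a routine verification. The impossibility direction for $q\in\{2,5,8\}$ is fine as a finite check (the paper instead derives it from Lemma~\ref{l:polydiff} and the degree data of \cite{shallue2013permutation}).
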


Cavenagh and Wanless \cite{cavenagh2010number} showed the special case
of \tref{t:Ham3} in which $q$ is prime. Their motivation was an application
to Latin bitrades that we discuss in the next section.

Suppose that $q-1=nk$, for some positive integers $n, k$. Let $\gamma$
be a primitive element of $\F_q^*$. Then we define
$C_{j, n} =\{\gamma^{ni+j} : 0 \leq i \leq k - 1\}$ to be a
\emph{cyclotomic coset} of the unique subgroup $C_{0,n}$ of index $n$
in $\F_q^*$.
A cyclotomic map $\psi_{a_0, \dots, a_{n - 1}}$ of index $n$ can then be defined
by
\begin{equation}\label{e:cyceqn}
\psi_{a_0, \dots, a_{n - 1}}(x) = \begin{cases}
0 &\text{if } x = 0, \\
a_ix &\text{if } x \in C_{i,n},
\end{cases}
\end{equation}
where $a_0, \dots, a_{n - 1} \in \F_q$. 
An orthomorphism is
\emph{non-cyclotomic} if it cannot be written as a cyclotomic
map for any index $n<q-1$.
We define a \emph{translation} $T_g$ of an
orthomorphism $\theta$ to be the orthomorphism $T_g[\theta](x) =
\theta(x + g) - \theta(g)$.
We say that an orthomorphism $\theta$ is \emph{irregular} if
$T_g[\theta]$ is non-cyclotomic for all $g\in\F_q$.
It was conjectured in \cite{FW17} that
irregular orthomorphisms exist over all sufficiently large fields.
We prove this and more in our last main result:

\begin{thm}\label{t:irreg}
There are irregular orthomorphisms over $\F_q$ for
$7<q\not\equiv1\bmod3$ and for even $q>4$.
For fields of odd characteristic, asymptotically almost all orthomorphisms
are irregular.
\end{thm}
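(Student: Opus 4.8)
The plan is to extract a single structural invariant — the reduced degree — that controls whether a translate of an orthomorphism can be cyclotomic. Suppose $\psi=\psi_{a_0,\dots,a_{n-1}}$ is a cyclotomic map of index $n$ with $n\mid q-1$ and $n<q-1$, and set $k=(q-1)/n\ge2$. The map $x\mapsto x^k$ on $\F_q^*$ has image of size $n$ and its value determines the coset of $x$, so $\psi(x)/x$ is a polynomial function of $x^k$ of degree at most $n-1$; hence the reduced polynomial of $\psi$ equals $\sum_{j=0}^{n-1}h_jx^{jk+1}$, which is supported on exponents $\equiv1\pmod k$ and has reduced degree at most $(n-1)k+1=q-k$. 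Conversely, for any $\theta$ of reduced degree $d<q$ the translate $T_g[\theta](x)=\theta(x+g)-\theta(g)$ again has reduced degree exactly $d$ (substituting $x\mapsto x+g$ preserves the leading term and keeps the degree below $q$, and subtracting a constant changes nothing) and is again an orthomorphism. Combining these: if $\theta$ is an orthomorphism of reduced degree $d\ge2$ such that \emph{no} divisor $k\ge2$ of $q-1$ satisfies both $k\mid d-1$ and $k\le q-d$, then no $T_g[\theta]$ is cyclotomic of index $<q-1$, so $\theta$ is irregular.

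For the first assertion I would take $d=q-3$, so that $\gcd(q-1,d-1)=\gcd(q-1,3)$ and the criterion fails only when $3\mid q-1$ (when $k=3$ is an obstruction). Hence for $q\not\equiv1\bmod3$ with $q\notin\{2,3,5,8\}$ every orthomorphism of degree $q-3$ — which exists by \tref{t:bndach} — is irregular, and among values with $7<q\not\equiv1\bmod3$ only $q=8$ is excluded here. For $q=8$ the only cyclotomic index $<q-1=7$ is $n=1$ (linear maps), so an orthomorphism is irregular exactly when it is not affine; one then exhibits an explicit non-affine orthomorphism of $\F_8$, e.g. an $\F_2$-linear map of degree $4$ whose matrix has irreducible characteristic polynomial over $\F_2$ but is not $\F_8$-scalar multiplication.

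For even $q>4$ the cases not already covered are $q=4^m$, $m\ge2$ (here $q\equiv1\bmod3$, so the $d=q-3$ test can fail via $k=3$). For these I would instead use $d=q-4$: since $q$ is even, $\gcd(q-1,d-1)=\gcd(q-1,4)=1$, so \emph{any} orthomorphism of $\F_{4^m}$ of reduced degree $q-4$ is automatically irregular. An alternative is to keep degree $q-3$ and exploit the $x^{q-5}$ coefficient: when $4\mid q$ one checks that $\binom{q-4}{q-5}=q-4$ and $\binom{q-3}{q-5}=\binom{q-3}{2}$ are both even, so this coefficient is translation-invariant, while its exponent $q-5\equiv2\pmod3$ rules out cyclotomy of index $(q-1)/3$; hence a degree-$(q-3)$ orthomorphism with nonzero $x^{q-5}$ coefficient is irregular. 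Either route reduces to an existence statement, and this is the main obstacle of the whole argument: one must adapt the construction underlying \tref{t:bndach} to $\F_{4^m}$ so as to produce an orthomorphism either of reduced degree exactly $q-4$, or of degree $q-3$ with a prescribed nonzero lower-order coefficient.

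For the last assertion I would argue by counting. A regular orthomorphism $\theta$ can be written $\theta(x)=\phi(x-g)+c$ for some cyclotomic map $\phi$ of index $<q-1$ and some $g,c\in\F_q$, so the number of regular orthomorphisms is at most $q^2\sum_{n\mid q-1,\ n<q-1}q^n$; for odd $q$ the largest proper divisor of $q-1$ is $(q-1)/2$, giving a bound of at most $q^{(q+5)/2}$. On the other hand the number $N_q$ of orthomorphisms of $\F_q$ is, for $q$ of odd characteristic, known to be $q^{(1-o(1))q}$ (of order $(q/e^2)^q$ up to polynomial factors), which exceeds $q^{(q+5)/2}$ by a factor tending to infinity. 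Hence the proportion of regular orthomorphisms is at most $q^{-(q-5)/2+o(q)}\to0$, so almost all orthomorphisms are irregular. The restriction to odd characteristic is precisely what is needed to have this strong lower bound on $N_q$ in hand; the counting half of the argument is routine.
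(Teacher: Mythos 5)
Your counting argument for the odd-characteristic statement is essentially the paper's (bound the non-irregular orthomorphisms by translates of cyclotomic maps, roughly $q^{O(q/2)}$, against the Eberhard--Manners--Mrazovi\'c lower bound $q^{(1-o(1))q}$), and your treatment of $7<q\not\equiv1\bmod3$ is also sound: your self-contained derivation that a cyclotomic map of index $n=(q-1)/k$ has reduced polynomial supported on exponents $\equiv1\pmod k$ and of degree at most $q-k$ is correct, and it recovers exactly the fact the paper imports from Niederreiter--Winterhof to conclude that a degree-$(q-3)$ orthomorphism is irregular when $3\nmid q-1$. (Your $q=8$ aside is fine in principle, though you should actually exhibit the non-affine $\F_2$-linear orthomorphism rather than gesture at one; alternatively $\theta_a$ from \eref{e:orthchar2} already does the job.)

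The genuine gap is the case $q=4^m$, $m\ge2$, i.e.\ the even fields with $q\equiv1\bmod3$, and you have correctly identified it yourself: both of your routes (an orthomorphism of reduced degree exactly $q-4$, or one of degree $q-3$ with a nonzero, translation-invariant $x^{q-5}$ coefficient) terminate in an existence statement that you do not prove and that does not follow from anything established in the paper. \tref{t:bndach} only produces degree $q-3$ and gives no control over lower-order coefficients, so ``adapt the construction'' is not a proof step; as written, the second assertion of the theorem is unproved for all $q=4^m$ with $m\ge 2$. The paper takes an entirely different route here: it uses the explicit piecewise orthomorphism $\theta_a$ of \eref{e:orthchar2}, observes (following the proof of \cite[Thm~5]{FW17}) that $\theta_a$ is irregular as long as the $3$-set $X_g=\{g+1,g+a,g+a+1\}$ is never a single cyclotomic coset of index $(q-1)/3$, and then counts: each coset forbids at most $2$ values of $a$, so at most $2(q-1)/3<q-2$ values of $a$ are bad and a good $a$ exists. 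If you want to complete your argument in the same spirit as the rest of your proof, you would either need to prove that orthomorphisms of reduced degree $q-4$ exist over $\F_{4^m}$ (an interesting but open-ended detour), or replace this case by a direct analysis of an explicit family such as $\theta_a$.
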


Note that the $q=2^{2k+1}$ subcase of \tref{t:irreg} was already shown
in \cite{FW17}.

The structure of this paper is as follows. In the next section we
provide several different constructions for orthomorphisms that are as
close as possible to each other in Hamming distance.  The proofs of
our main results are given in \sref{s:summary}. Then in \sref{s:conclude}
we offer two conjectures for future research.

\section{Orthomorphisms at minimal Hamming distance}

In this section we provide several different methods for producing
pairs of orthomorphisms that are as close to each other as possible,
in Hamming distance. None of our methods work for all fields, but
together our methods will combine in \sref{s:summary} to prove
\tref{t:Ham3}.  \tref{t:bndach} will then follow immediately given the
next observation.

\begin{lem}\label{l:polydiff}
Suppose that $f, g \in \F_q[x]$ are reduced orthomorphism polynomials,
where $q>3$.  If $H(f,g)=3$, then at least one of $f$ or $g$ must have
degree $q-3$.
\end{lem}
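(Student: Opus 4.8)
The plan is to exploit Lagrange interpolation together with the constraint that the two orthomorphisms agree outside a set of size exactly $3$. Write $f(x)-g(x)$ as a reduced polynomial; since $f$ and $g$ agree on all but three points $a,b,c\in\F_q$, the polynomial $f-g$ vanishes on $\F_q\setminus\{a,b,c\}$. A polynomial of degree less than $q$ that vanishes at $q-3$ prescribed points is a scalar multiple of $\prod_{e\in\F_q\setminus\{a,b,c\}}(x-e)$, hence has degree exactly $q-3$ (it cannot be the zero polynomial because $f\ne g$). So $\deg(f-g)=q-3$ precisely. In particular $f-g$ is nonzero, which we already knew, but more importantly its degree is pinned down exactly.

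Next I would use the standard factorisation $x^q-x=\prod_{e\in\F_q}(x-e)$, so that $\prod_{e\in\F_q\setminus\{a,b,c\}}(x-e)=(x^q-x)/\big((x-a)(x-b)(x-c)\big)$; performing the polynomial division, the leading terms give that the coefficient of $x^{q-3}$ in $f-g$ equals $\lambda$, the coefficient of $x^{q-2}$ equals $\lambda(a+b+c)$, and the coefficient of $x^{q-1}$ equals $-\lambda$, where $\lambda\ne0$ is the interpolation scalar. Alternatively one can avoid explicit division and argue directly from symmetric-function identities for $\prod_{e\notin\{a,b,c\}}(x-e)$. The key structural fact I want to extract is: the coefficient of $x^{q-1}$ in $f-g$ is $-\lambda\ne0$, so $f-g$ has a genuinely nonzero $x^{q-1}$ term, \emph{but} both $f$ and $g$ are reduced orthomorphism polynomials, hence by \tref{t:reddegorth} each has degree at most $q-3<q-1$. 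Therefore the $x^{q-1}$ coefficients of $f$ and of $g$ are both $0$, so the $x^{q-1}$ coefficient of $f-g$ is $0$ as well — a contradiction with $-\lambda\ne0$.

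This contradiction shows my naive claim "$f-g$ is a scalar multiple of $\prod_{e\notin\{a,b,c\}}(x-e)$" is too strong: I have forgotten that $f$ and $g$ need not have the \emph{same} degree, so $f-g$ is only forced to vanish on $\F_q\setminus\{a,b,c\}$ \emph{as a function}, i.e. modulo $x^q-x$, not as a polynomial of degree $<q$. The correct statement is that the reduced form of $f-g$ — which has degree $<q$ — is $\lambda\prod_{e\notin\{a,b,c\}}(x-e)$ reduced modulo $x^q-x$; and reducing $x^{q-1}$ introduces lower-degree terms only, while reducing the degree-$q$ term $x^q$ to $x$ and the degree-$q-1$ term is what must be tracked carefully. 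So the honest argument is: the reduced polynomial of the function $f-g$ is obtained from $\lambda(x^q-x)/\big((x-a)(x-b)(x-c)\big)$ by replacing its top term; but in fact this rational expression is already a polynomial of degree exactly $q-3$, so no reduction is needed, and $\deg(f-g)=q-3$ outright. Since $\deg f,\deg g\le q-3$ and $\deg(f-g)=q-3$, at least one of $f,g$ must attain degree $q-3$: if both had degree $\le q-4$ then $f-g$ would too. This is the desired conclusion.

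The main obstacle is purely bookkeeping: making sure the interpolation argument is phrased so that $f-g$ is literally the polynomial $\lambda\prod_{e\notin\{a,b,c\}}(x-e)$ of degree $q-3$ (which is correct because a polynomial of degree $<q$ is determined by its values on $\F_q$, and this product has degree $q-3<q$ and the right values up to scaling) — and then the conclusion is immediate from comparing degrees with \tref{t:reddegorth}. No deep idea is needed beyond Lagrange interpolation and the degree bound already proved; the only subtlety worth a sentence in the writeup is why the vanishing set of size $q-3$ forces degree \emph{exactly} $q-3$ rather than merely \emph{at least} $q-3$, namely because the reduced degree is automatically $<q$.
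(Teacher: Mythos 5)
Your final conclusion is reached by essentially the paper's argument, but the route there contains a genuinely false step that you never quite repair. The claim that ``a polynomial of degree less than $q$ that vanishes at $q-3$ prescribed points is a scalar multiple of $\prod_{e\in\F_q\setminus\{a,b,c\}}(x-e)$'' is wrong: for instance $(x-a)\prod_{e\notin\{a,b,c\}}(x-e)$ has degree $q-2<q$ and vanishes on $\F_q\setminus\{a,b,c\}$, and $\bigl(\lambda+\mu(x-a)\bigr)\prod_{e\notin\{a,b,c\}}(x-e)$ with suitable nonzero $\lambda,\mu$ vanishes on exactly $\F_q\setminus\{a,b,c\}$ while having degree $q-2$. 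The space of reduced polynomials vanishing on $\F_q\setminus\{a,b,c\}$ is three-dimensional and its generic element has degree $q-1$, so the values of $f-g$ at $a,b,c$ are not automatically ``the right values up to scaling''. Correspondingly, your closing diagnosis that exactness of the degree follows ``because the reduced degree is automatically $<q$'' is not a valid reason; the correct way to bound $\deg(f-g)$ from above is the one you mention in passing but do not deploy at the decisive moment, namely \tref{t:reddegorth}, which gives $\deg(f-g)\le\max\{\deg(f),\deg(g)\}\le q-3$. Only with that in hand does ``nonzero with $q-3$ roots'' force $f-g$ to be a scalar multiple of $\prod_{e\notin\{a,b,c\}}(x-e)$.

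Fortunately the lemma needs only the two inequalities you do have. Since $h=f-g$ is nonzero with $q-3$ distinct roots, $\deg(h)\ge q-3$, hence $\max\{\deg(f),\deg(g)\}\ge\deg(h)\ge q-3$; and \tref{t:reddegorth} gives $\max\{\deg(f),\deg(g)\}\le q-3$. That three-line argument is exactly the paper's proof, and it is the kernel of your final paragraph once you delete the interpolation detour, the erroneous coefficient computation (a degree-$(q-3)$ polynomial has no $x^{q-1}$ or $x^{q-2}$ term, so the ``contradiction'' in your second paragraph is spurious), and the unsupported ``scalar multiple'' claim.
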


\begin{proof}
Let $h = f - g$. Then $\deg(h) \leq \max\{\deg(f), \deg(g)\}\leq q-3$
by \tref{t:reddegorth}. Now $h$ is nonzero but has $q - 3$ roots, so
$\deg(h) \geq q - 3$. It follows that $\max\{\deg(f), \deg(g)\} = q-3$.
\end{proof}

Suppose that orthomorphism polynomials $f,g\in\F_q[x]$
satisfy $H(f,g)=k$. Define
\begin{align*}
L_1&=\big\{(i,f(j)-j+i,f(j)+i):i,j\in\F_q,\ f(j)\ne g(j)\big\},\\
L_2&=\big\{(i,g(j)-j+i,g(j)+i):i,j\in\F_q,\ f(j)\ne g(j)\big\}.
\end{align*}
Then it is easy to check that $L_1$ and $L_2$ are disjoint sets of
$kq$ ordered triples each, such that
\begin{itemize}
\item The projection of $L_1$ onto any two coordinates is $1$-to-$1$,
and its image equals the image of the same projection acting on $L_2$.
\item The projection of $L_1$ (or $L_2$) onto any one coordinate is
$k$-to-$1$.
\end{itemize}
These are exactly the conditions that mean that the pair $(L_1,L_2)$
forms what is called a \emph{$k$-homogeneous Latin bitrade}
(cf.~\cite{cavenagh2010number}). Hence all the methods in the following
subsections can be applied to construct $3$-homogeneous Latin bitrades.
For most fields, we will end up giving several different construction
methods.

\subsection{Fields of characteristic $p \notin \{2, 5\}$.}

Our first method works when the characteristic of the field is not
equal to $2$ or $5$.

\begin{thm}\label{non25}
Suppose that $\F_q$ has characteristic $p\notin\{2,5\}$.
Then there exist orthomorphism polynomials $f,g\in\F_{q}[x]$
with $H(f,g)=3$.
\end{thm}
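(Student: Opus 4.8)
The plan is to construct two explicit orthomorphisms that differ on exactly three points, and the natural place to look is among linear maps and small perturbations of them. Over a field of characteristic $p \notin \{2,5\}$, the map $\theta_c(x) = cx$ is an orthomorphism precisely when $c \ne 0$ and $c \ne 1$, so there is plenty of room. My first instinct would be to take $g(x) = cx$ for a suitable constant $c$ and to look for an orthomorphism $f$ agreeing with $g$ off a three-element set $S = \{a,b,d\}$; on $S$, $f$ must permute the values $\{ca, cb, cd\}$ by a nontrivial permutation (a 3-cycle, since a transposition would force Hamming distance $2$ and we saw distinct orthomorphisms are at distance $\ge 3$). So I want $f$ to send $a \mapsto cb$, $b \mapsto cd$, $d \mapsto ca$ (or the reverse cycle), and $f(x) = cx$ elsewhere.

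The two conditions to check are that $f$ is a permutation — automatic, since we only rearranged three output values among themselves — and that $x \mapsto f(x) - x$ is a permutation. Off $S$ the difference map is $x \mapsto (c-1)x$, which is a bijection from $\F_q \setminus S$ onto $\F_q \setminus (c-1)S$. On $S$ the difference map takes the values $cb - a$, $cd - b$, $ca - d$. So the requirement is exactly that
\[
\{cb - a,\ cd - b,\ ca - d\} = (c-1)\{a, b, d\} = \{(c-1)a,\ (c-1)b,\ (c-1)d\}
\]
as sets. This is a system of polynomial equations in $a, b, d, c$ over $\F_q$; the cleanest subcase to try first is the "rotationally symmetric" one where $cb - a = (c-1)b$, $cd - b = (c-1)d$, $ca - d = (c-1)a$, i.e. $a = b$, which collapses — so instead one matches the 3-cycle on the difference side against a 3-cycle on $\{a,b,d\}$, say $cb - a = (c-1)d$, $cd - b = (c-1)a$, $ca - d = (c-1)b$. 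Summing these three equations gives $0 = 0$, so there are really two linear constraints on $(a,b,d)$ for each $c$, leaving a one-parameter family of candidate triples; one then needs to pick a value of $c$ (and scale, using the translation/dilation symmetry of orthomorphisms, to normalize) so that the resulting $a,b,d$ are genuinely distinct and nonzero-patterned correctly. Working this out should pin down an explicit formula such as $c = -1$ or $c = 2$ together with $a,b,d$ as small rational expressions, valid whenever the relevant denominators are invertible — which is where the hypotheses $p \ne 2$ and $p \ne 5$ enter.

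The main obstacle I anticipate is precisely the bookkeeping that forces the excluded characteristics: after reducing to the one-parameter family, the distinctness of $a, b, d$ and the non-degeneracy of the construction will amount to a handful of small nonzero conditions like $c \ne 1$, $c^2 + c + 1 \ne 0$ or $5 \ne 0$, and I would need to choose the free parameter to dodge all of them simultaneously. If the symmetric 3-cycle ansatz turns out to be too rigid (e.g. it only works for $q$ in certain residue classes), the fallback is to allow $g$ to be an arbitrary cyclotomic orthomorphism of small index rather than a pure dilation, or to let $f$ differ from $g$ by a rank-one-type correction supported on three points without insisting $g$ be linear; the cyclotomic framework set up in the introduction (the maps $\psi_{a_0,\dots,a_{n-1}}$) is presumably there to give exactly this extra flexibility. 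I would expect the final proof to exhibit a single closed-form pair $(f,g)$ with $f - g$ a degree-$(q-3)$ polynomial, verify the orthomorphism property by the set equation above, and check that the three exceptional values of $q$ are the only ones where every choice of parameters degenerates.
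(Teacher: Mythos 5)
There is a genuine gap, and it is not just unfinished bookkeeping: your central ansatz provably cannot work for infinitely many of the fields the theorem covers. With $g(x)=cx$ and $f$ obtained by cycling the three outputs on $S=\{a,b,d\}$, your set condition $\{cb-a,\,cd-b,\,ca-d\}=\{(c-1)a,\,(c-1)b,\,(c-1)d\}$ must be realised by some bijection between the two triples. Any matching that fixes a position (e.g.\ $cb-a=(c-1)a$) forces two of $a,b,d$ to coincide, as does one of the two $3$-cycles ($cb-a=(c-1)b$ gives $a=b$). The remaining $3$-cycle --- the one you wrote down --- reduces, after eliminating $a$ from the first and third equations, to $(c^2-c+1)(b-d)=0$, so it forces $c^2-c+1=0$. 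That equation has a solution in $\F_q$ only when $q\equiv 0$ or $1\pmod 3$; for $q\equiv 2\pmod 3$ (e.g.\ $q=11,17,23,\dots$) no choice of $c,a,b,d$ works, so the ``perturb a dilation on three points'' strategy fails outright for a positive proportion of the fields in the statement. (When it does work it essentially recovers the paper's separate Theorem~\ref{1mod3} for $q\equiv1\bmod3$.) Your proposed fallbacks --- replacing $g$ by a cyclotomic map of small index, or dropping linearity of $g$ --- are exactly where all the difficulty lives, and they are not developed; the prime-field case alone required a substantial case analysis in Cavenagh--Wanless. Also, the exclusions $p\ne2,5$ do not arise as denominators in any such formula: over $\F_5$ the minimum Hamming distance between orthomorphisms is $4$, so no construction of this kind can exist there, and the obstruction is not a degenerate parameter choice.

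The paper's actual proof is structurally different and much shorter: it handles $q=3$ by exhibiting $f=2x$, $g=2x+1$, and for $q=p^r>5$ it quotes the known prime-field result (orthomorphisms $\phi,\theta$ over the prime subfield $P$ with $H(\phi,\theta)=3$) and patches it into $\F_q$ by setting $f(x)=2x$ for $x\notin P$ and $f(x)=\phi(x)$ for $x\in P$ (similarly for $g$). Since $x\mapsto 2x$ and $x\mapsto x$ both preserve $P$, the glued maps remain orthomorphisms and inherit $H(f,g)=3$ from the prime field. If you want to salvage your approach, that reduction-to-$\F_p$ step is the idea you are missing; without it (or without a genuinely new construction for $q\equiv2\bmod3$), the argument does not close.
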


\begin{proof}
For $q=3$ the orthomorphism polynomials $f=2x$ and $g=2x + 1$ suffice.
Thus we may assume that $q=p^r>5$. Let $P$ be the prime subfield of $\F_q$.
Cavenagh and Wanless \cite{cavenagh2010number} showed that there exist
orthomorphisms $\phi, \theta\in P[x]$ such that $H(\phi,\theta)=3$.
If $r=1$ then we are done, so assume that $r>1$.
Define the following maps on $\F_q$,
\begin{equation*}
f(x)=
\begin{cases} 
2x &\text{if } x\notin P, \\ 
\phi(x) &\text{if } x\in P,
\end{cases}
\end{equation*}
\begin{equation*}
g(x)=
\begin{cases} 
2x &\text{if } x\notin P, \\ 
\theta(x) &\text{if } x\in P.
\end{cases}
\end{equation*}
We know that the map $x\mapsto 2x$ permutes $\F_q$, and it clearly maps $P$ to $P$, so it follows that it also permutes $\F_q\backslash P$. By assumption $\phi$ permutes $P$. Hence $f$ is a permutation. By similar arguments $f$
is an orthomorphism, and so is $g$. Also
$H(f,g)=H(\phi,\theta)=3$.
\end{proof}

\subsection{Fields of order $1\bmod 3$}

Our next method works for fields of order $q\equiv 1 \bmod 3$.

\begin{thm}\label{1mod3}
Let $q \equiv 1 \bmod 3$.  Then there exist orthomorphism polynomials
$f,g\in\F_{q}[x]$ with $H(f,g)=3$.
\end{thm}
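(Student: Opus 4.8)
The plan is to build $f$ and $g$ as cyclotomic maps of index $3$, exploiting the hypothesis $q\equiv1\bmod3$ (so $3\mid q-1$ and the three cyclotomic cosets $C_{0,3},C_{1,3},C_{2,3}$ partition $\F_q^*$). Recall the classical criterion (Niederreiter--Robinson, and also used in \cite{FW17}): the cyclotomic map $\psi_{a_0,a_1,a_2}$ is an orthomorphism precisely when both $(a_0,a_1,a_2)$ and $(a_0-1,a_1-1,a_2-1)$, viewed as functions that multiply each coset $C_{i,3}$ by the corresponding scalar, permute $\F_q$; concretely this happens iff $a_0,a_1,a_2$ are nonzero, $a_0-1,a_1-1,a_2-1$ are nonzero, and each of the two triples ``hits each coset exactly once'', i.e.\ the multiset $\{a_i\bmod C_{0,3}\}$ and the multiset $\{(a_i-1)\bmod C_{0,3}\}$ are both transversals of $\F_q^*/C_{0,3}$. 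I would first reduce the search to finding one such orthomorphism $\psi=\psi_{a_0,a_1,a_2}$ with the extra property that $\psi$ agrees with a \emph{second} orthomorphism on all but $3$ points. The natural way to get Hamming distance exactly $3$ is to take $g$ to be a translate or a ``coset-swap'' modification of $f$: for instance fix the scalars on two cosets and only perturb the behaviour on a small piece of the third, or compose with a multiplication by a cube root of unity $\omega$ (note $\omega\in C_{0,3}$ since $3\mid q-1$), which permutes each coset.

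Here is the concrete route I would pursue. Let $\omega$ be a primitive cube root of unity in $\F_q$. Consider $f(x)=cx$ for a suitable constant $c$ (a linear orthomorphism, which requires $c\ne0$ and $c\ne1$) and define $g$ by $g(x)=f(x)$ for $x$ outside some $3$-element set $S$, and redefine $g$ on $S$ so that $g$ is still an orthomorphism. For $g$ to remain a permutation, $g(S)=f(S)$ as sets (so $g|_S$ is a nontrivial permutation of the three values $f(S)$, i.e.\ a $3$-cycle), and for $g-\mathrm{id}$ to remain a permutation we need $\{g(s)-s:s\in S\}=\{f(s)-s:s\in S\}$ as sets as well. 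Writing $S=\{s_1,s_2,s_3\}$, $f(s_i)=cs_i$, these two conditions say: the $3$-cycle $\sigma$ on indices must satisfy $cs_{\sigma(i)}-s_i$ is a permutation of $\{cs_i-s_i\}=\{(c-1)s_i\}$. Choosing $S$ to be a multiplicative coset of the order-$3$ subgroup $\{1,\omega,\omega^2\}$ scaled by some $t$, say $S=\{t,t\omega,t\omega^2\}$, and $\sigma:s\mapsto\omega s$, both required value-sets become $\{(c-1)t,(c-1)t\omega,(c-1)t\omega^2\}$ in some order — so the constraints are automatically satisfied. Thus $g(x)=cx$ off $S$ and $g(ts)=c\omega ts$ on $S$ is an orthomorphism with $H(f,g)=3$, provided $c\ne c\omega$ on $S$ (true since $\omega\ne1$) so the three points genuinely move.

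The remaining work is to verify that such a linear $f(x)=cx$ exists, i.e.\ that $\F_q$ has an element $c\notin\{0,1\}$; this fails only for $q=2$, and the hypothesis $q\equiv1\bmod3$ with $q$ a prime power excludes $q=2$ (indeed forces $q\ge4$). One should double-check the tiny case $q=4$: there $c$ can be taken to be a generator of $\F_4^*$ (which has order $3$, so $c=\omega$), $f(x)=\omega x$ is an orthomorphism since $\omega\ne0,1$, $S=\F_4^*$, and $g$ sends $x\mapsto\omega^2x$ on $S$ and fixes $0$; one verifies directly this is an orthomorphism at Hamming distance $3$ from $f$. The main obstacle I anticipate is purely bookkeeping: confirming that for the chosen $t$ (any nonzero $t$ works) the set $S=\{t,t\omega,t\omega^2\}$ has size exactly $3$ — which needs $\omega\ne1$, i.e.\ $q>2$ — and that $g$ as defined is genuinely not equal to $f$ at all three points of $S$, which needs $c\ne0$; both are immediate. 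So there is no serious analytic difficulty here; the theorem reduces to the observation that in a field of order $\equiv1\bmod3$ a nontrivial cube root of unity exists, letting us replace the identity permutation on one order-$3$ multiplicative coset by multiplication-by-$\omega$ without disturbing the orthomorphism property.
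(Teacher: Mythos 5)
Your overall strategy (perturb a linear orthomorphism $f(x)=cx$ on a single multiplicative coset $S=\{t,t\omega,t\omega^2\}$ of the cube roots of unity) is close in spirit to the paper's proof, which invokes Niederreiter and Winterhof's ``near-linear'' orthomorphisms --- maps that are linear off the order-$3$ subgroup $C_{0,k}$ --- and pairs one with the genuinely linear map $g(x)=a_1x$. But your verification of the orthomorphism condition for $g$ contains a genuine error. You claim that with $\sigma:s\mapsto\omega s$ ``both required value-sets become $\{(c-1)t,(c-1)t\omega,(c-1)t\omega^2\}$ in some order, so the constraints are automatically satisfied.'' That is false: the difference set for $g$ on $S$ is $\{g(s)-s:s\in S\}=\{(c\omega-1)s:s\in S\}=(c\omega-1)\,t\,\{1,\omega,\omega^2\}$, whereas the target set is $(c-1)\,t\,\{1,\omega,\omega^2\}$. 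These coincide if and only if $(c\omega-1)/(c-1)\in\{1,\omega,\omega^2\}$, which (since $c\neq0$ and $\omega\neq1$) forces the single value $c=-\omega$; it does not hold for a generic $c\notin\{0,1\}$. Concretely, over $\F_7$ with $\omega=2$, $t=1$, $S=\{1,2,4\}$ and $c=3$, your $g$ satisfies $g(1)-1=5$, $g(2)-2=3$, $g(4)-4=6$, while $f(s)-s=2s$ gives $\{2,4,1\}$ on $S$; the map $x\mapsto g(x)-x$ then takes the value $3$ at both $x=2$ and $x=5$, so $g$ is not an orthomorphism. Your check at $q=4$ succeeded only because there $c=\omega$ equals $-\omega$ in characteristic $2$.

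The argument is salvageable: take $c=-\omega$ (which lies outside $\{0,1\}$ because $\omega$ has order $3$, so $\omega\neq-1$), and use $1+\omega+\omega^2=0$ to check that $(c\omega-1)=-\omega^2-1=\omega$ and $(c-1)=-\omega-1=\omega^2$ are both cube roots of unity, so both difference sets equal $tC_{0,k}$ as required. With that repair your construction becomes an explicit, self-contained instance of the paper's near-linear orthomorphism and would be a perfectly good alternative proof; as written, however, the key step fails.
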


\begin{proof}
Let $q - 1 = 3k$, for some positive $k \in \mathbb{Z}$. 
Niederreiter and Winterhof \cite{niederreiter2005cyclotomic} showed that there
exists a ``near-linear'' orthomorphism $f$ over $F_q$, where
\begin{equation*}
f(x) = \begin{cases}
a_0x & \text{if }x \in C_{0,k}, \\
a_1x & \text{if }x \notin C_{0,k},
\end{cases}
\end{equation*}
for distinct $a_0,a_1 \in \F_q\setminus\{0,1\}$.
Now let $g$ be the map defined by $g(x)=a_1x$, and note that $g$ is an
orthomorphism. Also $H(f,g)=|C_{0,k}|=3$.
\end{proof}

In many instances when we apply \lref{l:polydiff} we will not know
which of the two polynomials has degree $q-3$ (plausibly they both
have that degree). However, in \tref{1mod3} it is clear that
$\deg(g)=1$, so $\deg(f)=q-3$.

\subsection{Fields of large odd order}

Our next method works in all sufficiently large fields of odd
characteristic. We will use the following auxiliary result.

\begin{lem}\label{l:partialorth}
Suppose that $\theta$ is an orthomorphism satisfying $\theta(0) = 0$, $\theta(b) = c$ and $\theta(c) = c - b$, for some distinct $b,c \in \F_q^*$. Then there exists an orthomorphism polynomial $\phi\in\F_q[x]$ such that $H(\theta,\phi)=3$.
\end{lem}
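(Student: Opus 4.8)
The plan is to modify $\theta$ on exactly three points so as to obtain a new orthomorphism $\phi$. The natural choice, given the hypotheses $\theta(0)=0$, $\theta(b)=c$, $\theta(c)=c-b$, is to define $\phi$ to agree with $\theta$ everywhere except on the set $\{0,b,c\}$, and on that set to cyclically permute the images: set $\phi(0)=c$, $\phi(b)=c-b$, and $\phi(c)=0$. First I would check that $\phi$ is still a permutation of $\F_q$: since $\phi$ and $\theta$ have the same image set on $\{0,b,c\}$ (namely $\{0,c,c-b\}$, which are distinct because $b,c$ are distinct nonzero elements and $b\ne c$ implies $c-b\ne 0$, while $c=c-b$ would force $b=0$ and $0=c-b$ would force $b=c$), and agree elsewhere, $\phi$ is a bijection. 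This uses only that $0,b,c$ are distinct and that $0,c,c-b$ are distinct, both of which follow from the hypotheses.

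Next I would verify that $x\mapsto\phi(x)-x$ is still a permutation. Writing $\delta_\theta(x)=\theta(x)-x$ and $\delta_\phi(x)=\phi(x)-x$, these two maps agree off $\{0,b,c\}$, so it suffices to show they have the same image on $\{0,b,c\}$. Compute: $\delta_\theta(0)=0$, $\delta_\theta(b)=c-b$, $\delta_\theta(c)=(c-b)-c=-b$; and $\delta_\phi(0)=c-0=c$, $\delta_\phi(b)=(c-b)-b=c-2b$, $\delta_\phi(c)=0-c=-c$. So I need $\{0,\,c-b,\,-b\}=\{c,\,c-2b,\,-c\}$ as sets. Here is where a constraint on $b$ and $c$ must enter — this is the main obstacle, and it explains why \lref{l:partialorth} is stated for odd characteristic and why it is only a stepping stone rather than a standalone construction. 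I expect the correct choice of $\phi$ on $\{0,b,c\}$ is not the cyclic shift I guessed above but rather some other permutation of the three images, or that the lemma's conclusion is obtained by pairing the given hypotheses with a specific algebraic relation among $b$ and $c$ that is automatically available (or that one imposes while still being able to find such a $\theta$). Concretely, matching $0\in\{c,c-2b,-c\}$ forces one of $c=0$ (excluded), $c=2b$, or $c=-c$ i.e. $2c=0$ (excluded in odd characteristic unless $c=0$); so $c=2b$, and then one checks the remaining two elements match: $\{c-b,-b\}=\{b,-b\}$ versus $\{c-2b,-c\}=\{0,-2b\}=\{0,-c\}$, which does \emph{not} work — so the cyclic shift is the wrong permutation and I would instead try $\phi(0)=c-b$, $\phi(b)=0$, $\phi(c)=c$ restricted appropriately, or reconsider which three values $\theta$ takes.

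Reworking with the transposition-style swap: define $\phi$ to equal $\theta$ off $\{b,c\}$ and swap $\phi(b)=c-b=\theta(c)$, $\phi(c)=c=\theta(b)$; then $H(\theta,\phi)=2$, not $3$, and moreover the difference map gives $\delta_\phi(b)=c-2b$, $\delta_\phi(c)=0$, so I need $\{c-b,-b\}=\{c-2b,0\}$, forcing $c=b$ (excluded) or the pairing $c-b=0$, again excluded. This failure shows that exactly one of the three points with prescribed $\theta$-values must participate, together with one or two "free" points, so the real construction likely moves $0,b,c$ to $c,c-b,0$ as in my first attempt but with an additional relation such as $c=2b$ \emph{and} a fourth point brought in — except that $H(\theta,\phi)=3$ is demanded, so only three points may move. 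The resolution I would pursue is: choose the three moved points to be exactly $\{0,b,c\}$, require $\delta_\theta$ and $\phi-x$ to agree in image there, and solve the resulting system $\{\,\delta_\phi(0),\delta_\phi(b),\delta_\phi(c)\,\}=\{0,c-b,-b\}$ over the six possible bijections $\phi|_{\{0,b,c\}}\to\{0,c,c-b\}$; one of these will reduce to a single condition on $b,c$ (I anticipate $2b=c$ or $b+c=0$ or similar), and that condition is then folded into the hypothesis when \lref{l:partialorth} is invoked — i.e., the ambient construction of $\theta$ in the subsequent theorem will be arranged to also satisfy it. The main obstacle, then, is the casework to identify the unique viable bijection on $\{0,b,c\}$ and to confirm the compatibility condition is consistent with odd characteristic; the rest ($\phi$ is a polynomial of some degree, $H(\theta,\phi)=3$) is immediate once $\phi$ is a well-defined orthomorphism differing from $\theta$ on three points.
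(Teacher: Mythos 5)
Your overall strategy is the right one --- keep $\theta$ off $\{0,b,c\}$ and permute the three images $\{0,c,c-b\}$ among those points --- but you stop short of finding the permutation that actually works, and you end by conjecturing that an extra algebraic relation between $b$ and $c$ (such as $c=2b$) must be imposed and "folded into the hypothesis." That conjecture is false, and it is the gap in your argument. You tested one of the two $3$-cycles and one transposition; the \emph{other} $3$-cycle works unconditionally. Set
$$\phi(0)=c-b,\qquad \phi(b)=0,\qquad \phi(c)=c,$$
with $\phi=\theta$ elsewhere. Each of these three values differs from the corresponding value of $\theta$ (using only $b\ne 0$, $c\ne 0$, $b\ne c$), so $H(\theta,\phi)=3$, and the image set on $\{0,b,c\}$ is unchanged, so $\phi$ is a bijection. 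For the displacement map: $\phi(0)-0=c-b$, $\phi(b)-b=-b$, $\phi(c)-c=0$, which is exactly the set $\{0,\,c-b,\,-b\}=\{\theta(0)-0,\,\theta(b)-b,\,\theta(c)-c\}$. No condition on $b,c$ beyond the stated hypotheses is needed, and no restriction on the characteristic arises (indeed the lemma is applied in the paper to fields of characteristic $2$, so your reading that it is "stated for odd characteristic" is also incorrect). Completing the six-case check you outline would have revealed this; as written, your proposal does not establish the lemma.
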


\begin{proof}
Define $\phi : \F_q \rightarrow \F_q$ by,
\begin{equation*}
\phi(x) = \begin{cases}
\theta(x) & \text{if }x \in \F_q \backslash \{0, b, c\}, \\ 
c - b & \text{if }x = 0, \\
c & \text{if }x = c, \\
0 & \text{if }x = b.
\end{cases}
\end{equation*}
Clearly $H(\theta,\phi)=3$ and
$\{\phi(0), \phi(b), \phi(c)\}
= \{c - b, 0, c\} = \{\theta(0), \theta(b), \theta(c)\}$.
So  $\phi$ is injective by the injectivity of $\theta$. 
Similarly,
\begin{equation*}
\phi(x) - x = \begin{cases}
\theta(x) - x & \text{if }x \in \F_q \backslash \{0, b, c\}, \\
c - b & \text{if }x = 0, \\
0 & \text{if }x = c, \\
-b & \text{if }x = b,
\end{cases}
\end{equation*}
and $\{\phi(0) - 0, \phi(b) - b, \phi(c) - c\}=\{c - b, -b, 0\}
= \{\theta(b) - b, \theta(c) - c, \theta(0) - 0\}$, so
$x \mapsto \phi(x) - x$ is injective because $\theta$ is an
orthomorphism. The result follows.
\end{proof}

To make use of \lref{l:partialorth} we need to find orthomorphisms
that have three specific values. Luckily, the next result does the
work for us. It is due to Cavenagh, H{\"a}m{\"a}l{\"a}inen and Nelson
\cite{cavenagh2009completing}, who stated it only for prime fields of
odd order. Their proof generalises without change to all fields of odd
order, so it will not be repeated here.

\begin{thm}\label{t:parorth}
Let $q \geq 191$ be an odd prime power. Let $z, k \in \F_q \backslash \{0, 1\}$ and $e \in \F_q \backslash \{0, z, k, k + z - 1\}$. Then there exists an orthomorphism $\theta$ over $\F_q$ satisfying $\theta(0) = 0$, $\theta(1) = z$ and $\theta(k) = e$.
\end{thm}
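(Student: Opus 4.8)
The plan is to translate the statement into the existence of a transversal of the addition table that passes through three prescribed cells, and then to force such a transversal into existence by a character-sum count. Writing $\eta=\theta-\mathrm{id}$, an orthomorphism of $\F_q$ is exactly a permutation $\eta$ for which $\mathrm{id}+\eta$ is again a permutation; equivalently it is a transversal of the Cayley table $L(i,j)=i+j$ of $(\F_q,+)$, in which row $i$ chooses column $\eta(i)$ and carries symbol $\theta(i)=i+\eta(i)$. Under this dictionary the three demands $\theta(0)=0$, $\theta(1)=z$ and $\theta(k)=e$ become the requirement that the transversal occupy the cells $(0,0)$, $(1,z-1)$ and $(k,e-k)$ with symbols $0$, $z$ and $e$. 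First I would record that the hypotheses $z,k\in\F_q\setminus\{0,1\}$ and $e\in\F_q\setminus\{0,z,k,k+z-1\}$ are precisely the six inequalities that make these cells lie in three distinct rows, three distinct columns and three distinct symbols; hence they already form a partial transversal, and the theorem is the assertion that every size-$3$ partial transversal of the addition table can be completed once $q\ge191$.

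To produce the completion I would count it. Let $N$ be the number of orthomorphisms realising the three prescribed values. The two bijectivity requirements, on $\theta$ and on $\eta$, can both be detected through the standard criterion that $g\colon\F_q\to\F_q$ is a bijection if and only if $\sum_{x\in\F_q}\psi(g(x))=0$ for every nontrivial additive character $\psi$; substituting these criteria and fixing the three prescribed cells expresses $N$ as a sum over additive characters in which the all-trivial character contributes a main term and every other term is a genuine exponential sum of the form $\sum_{x}\psi\big(\text{a low-degree function of }x\big)$. Weil's bound estimates each such sum with an error of size $O(q^{1/2})$, so the whole computation should collapse to an inequality in which the main term is diminished only by a remainder that is $O(q^{1/2})$ against it; the numerical threshold $q\ge191$ is exactly the point at which the main term overtakes this remainder. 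That $q$ is odd is used to guarantee a convenient base orthomorphism (for instance $x\mapsto 2x$) and to keep the character computation free of characteristic-two degeneracies.

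The principal obstacle will be the simultaneous pair of permutation constraints. Because both $\theta$ and $\theta-\mathrm{id}$ must be bijective, the naive expansion becomes a double sum over pairs of characters rather than a single manageable exponential sum, and the work is to organise it so that only boundedly many nontrivial-character terms survive, each of Weil type, while checking that the three fixed cells never annihilate the main term. Extracting the constants carefully enough that the crossover occurs already at $q=191$, rather than at some larger bound, is the delicate quantitative heart of the argument. Finally I would observe that additive characters, their orthogonality relations and Weil's bound are identical over every finite field and see only the field structure of $\F_q$; nothing in the estimate distinguishes a prime from a proper prime power, which is exactly why the prime-field proof of Cavenagh, H\"am\"al\"ainen and Nelson carries over without change to all odd $q$, as asserted in the text.
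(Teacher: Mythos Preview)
The paper does not prove this theorem at all: it simply attributes the result to Cavenagh, H\"am\"al\"ainen and Nelson for prime $q$ and asserts that their argument carries over verbatim to odd prime powers. Your final paragraph correctly isolates why the transfer is automatic --- additive and multiplicative characters, their orthogonality, and the Weil bound depend only on the field structure of $\F_q$ --- and that observation is really all the present paper contributes here.

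Where your plan breaks down is the core counting step. The criterion ``$\sum_x\psi(g(x))=0$ for every nontrivial $\psi$'' is indeed equivalent to $g$ being a bijection, but it does not convert into a usable formula for the \emph{indicator} of bijectivity. Expanding $\mathbf{1}[\theta\text{ bijective}]=\prod_{y\in\F_q}|\theta^{-1}(y)|$ through characters produces a sum over $q$-tuples $(\psi_y)_{y\in\F_q}$ of characters, not a single sum over one character; doing this for both $\theta$ and $\theta-\mathrm{id}$ and then summing over all functions $\theta$ yields an object of permanent type with exponentially many terms, none of which are Weil sums $\sum_x\psi(\text{low-degree polynomial in }x)$. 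There is no reorganisation that leaves ``only boundedly many nontrivial-character terms'', and the main term is of factorial size, not the polynomial-in-$q$ quantity a Weil remainder could be compared against. Counting \emph{all} orthomorphisms, even asymptotically, requires the much heavier analytic machinery of Eberhard--Manners--Mrazovi\'c cited elsewhere in the paper; a bare Weil bound cannot do it.

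What Cavenagh, H\"am\"al\"ainen and Nelson actually do is not a global count but a targeted construction: they write down an explicit parametrised family of orthomorphisms (built from the linear orthomorphism $x\mapsto 2x$, which is where oddness of $q$ enters, modified on a controlled set), and the three prescribed values translate into a system of low-degree polynomial conditions on the parameters. The existence question then becomes whether a certain curve over $\F_q$ has an $\F_q$-point avoiding a bounded list of degenerate loci, and the Hasse--Weil bound gives this once $q\geq 191$. So the Weil bound does appear, but it is applied to a single concrete curve coming from a construction, not to an expansion of the bijection indicator.
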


Applying \tref{t:parorth} in combination with \lref{l:partialorth}, we
obtain the result for this subsection:

\begin{thm}\label{largeq}
Let $q \geq 191$ be an odd prime power.
Then there exist orthomorphism polynomials
$f,g\in\F_{q}[x]$ with $H(f,g)=3$.
\end{thm}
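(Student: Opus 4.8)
The plan is to combine \tref{t:parorth} with \lref{l:partialorth}, and the only real work is choosing the free parameters in \tref{t:parorth} so that the resulting orthomorphism $\theta$ satisfies the hypotheses of \lref{l:partialorth}. Recall that \lref{l:partialorth} asks for an orthomorphism with $\theta(0)=0$, $\theta(b)=c$ and $\theta(c)=c-b$ for some distinct $b,c\in\F_q^*$, and \tref{t:parorth} produces an orthomorphism with three prescribed values $\theta(0)=0$, $\theta(1)=z$, $\theta(k)=e$ subject to the constraints $z,k\in\F_q\setminus\{0,1\}$ and $e\in\F_q\setminus\{0,z,k,k+z-1\}$. So I would take $b=1$ throughout, set $c=z=\theta(1)$, and then I need a second prescribed point $\theta(c)=c-b=c-1$; that is, I set $k=c$ and $e=c-1$. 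Thus the task reduces to finding a single value $c\in\F_q$ such that (i) $z:=c$ lies in $\F_q\setminus\{0,1\}$, (ii) $k:=c$ lies in $\F_q\setminus\{0,1\}$, (iii) $e:=c-1$ lies in $\F_q\setminus\{0,z,k,k+z-1\}$, and (iv) the elements $b=1$ and $c$ are distinct and nonzero, as required by \lref{l:partialorth}.

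Next I would just unwind those membership conditions into inequations on $c$. Conditions (i), (ii) and (iv) all say $c\notin\{0,1\}$. Condition (iii) says $c-1\ne0$ (i.e.\ $c\ne1$), $c-1\ne z=c$ (always true, since $-1\ne0$ in odd characteristic), $c-1\ne k=c$ (again always true), and $c-1\ne k+z-1=2c-1$, i.e.\ $c\ne0$. So all of the constraints collapse to the single requirement $c\in\F_q\setminus\{0,1\}$. Since $q\ge191$, such a $c$ certainly exists (indeed $q-2\ge189$ of them). Fix any such $c$; by \tref{t:parorth} there is an orthomorphism $\theta$ over $\F_q$ with $\theta(0)=0$, $\theta(1)=c$ and $\theta(c)=c-1$. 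This is exactly the situation of \lref{l:partialorth} with $b=1$ and our chosen $c$, so \lref{l:partialorth} hands us an orthomorphism polynomial $\phi\in\F_q[x]$ with $H(\theta,\phi)=3$. Taking $f=\theta$ and $g=\phi$ completes the proof.

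I do not expect any genuine obstacle here: the argument is a routine bookkeeping exercise checking that the exceptional set in \tref{t:parorth} is avoided, and the hypothesis $q\ge191$ gives far more room than is needed. The one point that merits care is the verification that $-1\ne0$ (so that $c-1\ne c$), which is where the odd-characteristic assumption is used; and the observation that the constraint $e\ne k+z-1$ becomes $c\ne 0$ rather than something more restrictive. One could even note that other choices of $b$ and $c$ work equally well, but a single explicit valid choice suffices for the existence statement.
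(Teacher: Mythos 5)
Your proposal is correct and follows exactly the route the paper takes: the paper's ``proof'' of this theorem is literally the one sentence that \tref{t:parorth} combined with \lref{l:partialorth} gives the result, and your choice $b=1$, $z=k=c$, $e=c-1$ with $c\notin\{0,1\}$ is a valid instantiation that makes the implicit bookkeeping explicit. (One trivial quibble: $c-1\ne c$ holds because $1\ne 0$ in any field, not just in odd characteristic; the odd-characteristic hypothesis is needed only because \tref{t:parorth} requires it.)
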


\subsection{Fields of order $q = 2^r$ for odd $r$}

Our final method deals with the case of fields of order $2^r$ for odd integers $r$. We will need the following result of Williams \cite{williams1975note} regarding the reducibility of a cubic polynomial.

\begin{lem}\label{l:cubred}
Let $\F_q$ be a finite field of even order $q > 2$. Then the polynomial $f \in \F_q[x]$ defined by $f(x) = x^3 + ax + b$ has a unique root in $\F_q$ if and only if $\Tr(a^3b^{-2}) \neq \Tr(1)$.
\end{lem}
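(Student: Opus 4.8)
The plan is to count the roots of $f$ in $\F_q$ by analysing how the Frobenius automorphism $\sigma\colon x\mapsto x^q$ permutes the three roots, and to detect the parity of that permutation through an Artin--Schreier (trace) condition appropriate to characteristic $2$.

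First I would record that we may assume $b\neq0$ (otherwise $a^3b^{-2}$ is undefined) and check that $f$ is separable: since $f'(x)=3x^2+a=(x+\sqrt a)^2$, the only candidate repeated root is $\sqrt a$, but $f(\sqrt a)=a^{3/2}+a^{3/2}+b=b\neq0$. Hence $f$ has three distinct roots in the algebraic closure, and the number lying in $\F_q$ equals the number of fixed points of $\sigma$ acting on $\{r,s,t\}$; this is $3$, $1$ or $0$ according as $\sigma$ is the identity, a transposition, or a $3$-cycle. Thus \emph{$f$ has a unique root in $\F_q$ if and only if $\sigma$ acts as an odd permutation of the roots.} Next I would normalise: when $a\neq0$ the bijective substitution $x=\sqrt a\,t$ turns $f$ into $t^3+t+\beta$ with $\beta=b\,a^{-3/2}$, so that $\beta^{-2}=a^3b^{-2}$ and the root count is unchanged. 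The degenerate case $a=0$ is handled directly, since $x\mapsto x^3$ is a bijection of $\F_q$ exactly when $q\equiv2\bmod3$, which is precisely when $\Tr(1)=1$, matching the claim because then $\Tr(a^3b^{-2})=\Tr(0)=0\neq\Tr(1)$.

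The heart of the argument is an odd invariant that survives in characteristic $2$, where the classical ``$\sqrt{\mathrm{disc}}$'' test collapses (the product $\prod_{i<j}(\alpha_i+\alpha_j)$ is $S_3$-symmetric, not merely $A_3$-symmetric). I would introduce $\rho=\sum_{i<j}\alpha_i/(\alpha_i+\alpha_j)$ over the three roots; using $\alpha_i/(\alpha_i+\alpha_j)+\alpha_j/(\alpha_i+\alpha_j)=1$ one checks that a transposition sends $\rho\mapsto\rho+1$ while a $3$-cycle fixes $\rho$. Consequently $\sigma$ is odd iff $\sigma(\rho)\neq\rho$ iff $\rho\notin\F_q$. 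Setting $w=\rho^2+\rho$, a short manipulation gives the manifestly symmetric form $w=\sum_{i<j}\alpha_i\alpha_j/(\alpha_i+\alpha_j)^2\in\F_q$, and $\rho$ is a root of the Artin--Schreier polynomial $y^2+y+w$. Hence $\rho\notin\F_q$ iff $\Tr(w)=1$, so $f$ has a unique root iff $\Tr(w)=1$.

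It then remains to evaluate $w$ in closed form, which is the main technical step. Working with $t^3+t+\beta$ we have $\alpha_1+\alpha_2+\alpha_3=0$, so $\alpha_i+\alpha_j$ equals the third root and, clearing denominators with $rst=\beta$, $w=(rs)^3/\beta^2+(rt)^3/\beta^2+(st)^3/\beta^2$. Writing $p=rs,\ q=rt,\ u=st$, the elementary symmetric functions are $p+q+u=1$, $pq+pu+qu=rst(r+s+t)=0$ and $pqu=\beta^2$, so Newton's identity (with $3\equiv1$) gives $p^3+q^3+u^3=1+\beta^2$ and therefore $w=\beta^{-2}+1=a^3b^{-2}+1$. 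Thus $\Tr(w)=\Tr(a^3b^{-2})+\Tr(1)$, and $\Tr(w)=1$ is equivalent to $\Tr(a^3b^{-2})\neq\Tr(1)$, which together with the $a=0$ case completes the proof. I expect the only real obstacle to be producing the correct characteristic-$2$ parity invariant $\rho$ and pushing the symmetric-function calculation through to the clean value $w=a^3b^{-2}+1$; everything else is bookkeeping.
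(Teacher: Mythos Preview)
Your argument is correct. The paper itself does not prove this lemma at all: it simply quotes it as a result of Williams \cite{williams1975note} and moves on. So there is no ``paper's own proof'' to compare against line by line; what you have supplied is a genuine, self-contained proof where the paper defers to the literature.

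Your route---detecting the parity of the Frobenius action on the roots via the Berlekamp-type invariant $\rho=\sum_{i<j}\alpha_i/(\alpha_i+\alpha_j)$, reducing to the Artin--Schreier condition $\Tr(\rho^2+\rho)=1$, and then evaluating $\rho^2+\rho=a^3b^{-2}+1$ by a symmetric-function computation after the normalisation $x=\sqrt a\,t$---is exactly the characteristic-$2$ analogue of the classical discriminant test, and it is carried out cleanly. The separability check, the $a=0$ case (matching $\gcd(3,q-1)=1$ with $\Tr(1)=1$ via the parity of $r$), and the Newton-identity step (with $3\equiv1$) are all sound. This is essentially the argument in Williams' paper, so you have reconstructed the cited result rather than found an alternative; the payoff is that your write-up makes the paper self-contained at this point.
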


We will also need the following construction of an orthomorphism of a finite field of even order.
Let $q = 2^r$ for some integer $r \geq 3$ and let $a \in \F_q \backslash \{0, 1\}$. Define $H = \{0, 1, a, a + 1\}$ and let $c \in \F_q \backslash H$.
In \cite{FW17} it is shown that the map,
\begin{equation}\label{e:orthchar2}
\theta_a(x) = \begin{cases}
ax + a(a + 1) & \text{if }x \in H + c, \\
ax & \text{otherwise},
\end{cases}
\end{equation}
is an orthomorphism over $\F_q$ satisfying $\theta_a(0)=0$.

We now prove that when $q \geq 32$ is an odd power of $2$, there exist two orthomorphism polynomials over $\F_q$ at Hamming distance 3 from each other.

\begin{thm}\label{t:oddtwo}
Let $q = 2^r$ for some odd integer $r \geq 5$.  Then there exist
orthomorphism polynomials $f,g\in\F_{q}[x]$ with $H(f,g)=3$.
\end{thm}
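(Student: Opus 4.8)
The plan is to combine the even-order orthomorphism $\theta_a$ from \eref{e:orthchar2} with \lref{l:partialorth}, just as \tref{largeq} combines \tref{t:parorth} with \lref{l:partialorth}. Recall that \lref{l:partialorth} only requires an orthomorphism $\theta$ with $\theta(0)=0$, $\theta(b)=c$ and $\theta(c)=c-b$ for some distinct $b,c\in\F_q^*$; since we are in characteristic $2$, the condition $\theta(c)=c-b$ is the same as $\theta(c)=b+c$. So the whole task reduces to: find $a\in\F_q\setminus\{0,1\}$ and $c\in\F_q\setminus H$ (where $H=\{0,1,a,a+1\}$) so that $\theta_a$ takes the required shape on some pair $b,c$.

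First I would analyse the values of $\theta_a$. Since $\theta_a(x)=ax$ off the coset $H+c$ and $\theta_a(x)=ax+a(a+1)$ on $H+c$, and $0\notin H+c$, the condition $\theta_a(0)=0$ is automatic. For the other two conditions, I would look for $b\in H+c$, so that $\theta_a(b)=ab+a(a+1)$, and then impose $\theta_a(b)=c$; solving gives $c=ab+a(a+1)$, which expresses $c$ in terms of $a$ and $b$. Then I need $\theta_a(c)=b+c$. There are two cases depending on whether $c$ itself lies in the coset $H+c$ (equivalently $c-c=0\in H$, which is always true!), so in fact $c\in H+c$ always, hence $\theta_a(c)=ac+a(a+1)$. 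Setting this equal to $b+c$ gives a second polynomial relation between $a$, $b$, $c$. Substituting $c=ab+a(a+1)$ into that relation yields a single equation; I expect it to be (after clearing) a polynomial identity of low degree in $b$ with coefficients that are polynomials in $a$ — plausibly a quadratic or cubic in $b$ — whose solvability is governed by a trace condition of the type in \lref{l:cubred}. This is where Williams' lemma enters: I would arrange the equation into the normal form $y^3+\alpha y+\beta=0$ (by an affine substitution, legitimate in characteristic $2$ when the relevant coefficients are nonzero) and invoke \lref{l:cubred} to guarantee a root whenever $\Tr(\alpha^3\beta^{-2})\ne\Tr(1)$.

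The main obstacle, and the reason the hypothesis is $q=2^r$ with $r$ \emph{odd} and $r\ge5$, is controlling this trace condition together with all the genericity requirements: $a\notin\{0,1\}$, $c\notin H$, $b$ distinct from $c$, $b,c\in\F_q^*$, and $b\in H+c$. For odd $r$ we have $\F_4\not\subseteq\F_q$, which typically lets one pick $a$ to be a fixed element of $\F_q\setminus\F_4$ (or an element satisfying a convenient minimal relation) so that the trace expression simplifies to a constant that can be checked to differ from $\Tr(1)=1$; the count $r\ge5$ (i.e.\ $q\ge32$) then leaves enough room to satisfy the finitely many exclusions. I would handle it by first fixing a good $a$ (verifying $a\ne0,1$), then reducing the cubic in $b$, checking the trace inequality holds for that $a$ (possibly using that $\Tr$ is $\F_2$-linear and that certain subexpressions have trace $0$), extracting a root $b$, setting $c=ab+a(a+1)$, and finally verifying the handful of distinctness and non-membership conditions — discarding at most a bounded number of bad $b$ or adjusting $a$ if the first choice collides. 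With $\theta=\theta_a$ in hand satisfying the hypotheses of \lref{l:partialorth}, that lemma produces $\phi$ with $H(\theta,\phi)=3$, and we take $f=\theta$, $g=\phi$.
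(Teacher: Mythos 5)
Your overall strategy (use $\theta_a$ from \eref{e:orthchar2} and feed it into \lref{l:partialorth}) is the right one and is exactly what the paper does, but the specific branch you commit to is inconsistent, and this is a genuine gap. You place $b$ in the coset $H+c$, so that $\theta_a(b)=ab+a(a+1)$, and derive the two equations $c=ab+a(a+1)$ and $ac+a(a+1)=b+c$. Eliminating $c$ gives $b(a^2+a+1)=a^2(a+1)$, so (since $r$ is odd, $a^2+a+1\ne0$ for every $a\in\F_q$) the pair $(b,c)$ is uniquely determined by $a$, namely $b=a^2(a+1)(a^2+a+1)^{-1}$ and $c=a(a+1)^2(a^2+a+1)^{-1}$, whence $b+c=a(a+1)(a^2+a+1)^{-1}$. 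One checks directly that this is never an element of $H=\{0,1,a,a+1\}$ when $a\notin\{0,1\}$: the case $b+c=1$ forces $0=1$, the case $b+c=a$ forces $a=0$, the case $b+c=a+1$ forces $a=1$, and $b+c=0$ forces $a\in\{0,1\}$. So $b\notin H+c$ for every admissible $a$, contradicting the assumption that launched the computation; the branch is empty. Note also that your system is linear, not cubic, in $b$ once $a$ is fixed, so the place where you expected \lref{l:cubred} to enter never materialises.

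The branch that works is the other one: take $b\notin H+c$, so that $\theta_a(b)=ab$ and hence $c=ab$. The paper runs this by first choosing $c$ with $\Tr(c^{-3})=0$ and $c^3+c+1\ne0$ (possible because cubing permutes $\F_q^*$ when $r$ is odd); the condition $\theta_a(c)=b+c$ then becomes the cubic $a^3+(c+1)a^2+ca+c=0$ in $a$, and that is where Williams' result \lref{l:cubred} is invoked, via the computation $\Tr\big((c^2+c+1)^3c^{-4}\big)=0\ne\Tr(1)$ (using $\Tr(1)=1$ for odd $r$), to produce a root $a$; one then verifies $b=c/a\notin H+c$. (Incidentally, if you fix $a$ first, as your plan suggests, this correct branch gives the linear equation $c(a^2+a+1)=a^2(a+1)$, and the membership conditions reduce to $a^3+a+1\ne0$ and $a^3+a^2+1\ne0$, so Williams' lemma can be avoided altogether --- but that is not the computation your proposal sets up.)
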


\begin{proof}
There are $2^{r - 1} - 1$ non-zero elements with zero trace in $\F_q$, and the map $x \mapsto x^{-3}$ is a permutation of $\F^*_q$. It follows that there are $2^{r - 1} - 1$ choices of an element $c \neq 0$ such that $\Tr(c^{-3})=0$. As $2^{r - 1}-1>3$, there exists some $c \in \F_q^*$ such that $c^3 + c + 1 \neq 0$ and $\Tr(c^{-3}) = 0$. Define the polynomial $g \in \F_q[x]$ by $g(x) = x^3 + (c^2 + c + 1)x + c^2$. Note that
\begin{equation*}
\begin{aligned}
\Tr\big((c^2 + c + 1)^3c^{-4}\big)
&= \Tr(c^2) + \Tr(c) + \Tr(c^{-1}) + \Tr(c^{-4}) + \Tr(c^{-3}) 
= 0 \neq \Tr(1),
\end{aligned}
\end{equation*}
using the fact that $\Tr(a) = \Tr(a^2)= \Tr(a^4)$ for all $a \in \F_q$. Hence, \lref{l:cubred} implies that $g$ has a root. Let $f \in \F_q[x]$ be defined by $f(x) = g(x + c + 1) = x^3 + (c + 1)x^2 + cx + c$. It follows that $f$ also has a root, say $a$. Define $H = \{0, 1, a, a + 1\}$ and note that $a \notin \{0, 1, c, c + 1\}$ as none of these are roots of $f$. Since $a \notin \{0, 1\}$ and $c \notin H$, we can define an orthomorphism $\theta_a$ by \eref{e:orthchar2}.
Define $b = \frac ca$. We claim that $b \notin H + c$. If $b = c$ then $a = 1$, a contradiction. If $b = c + 1$ then $a = \frac c{c + 1}$ and so $0 = f(a) = f(\frac c{c + 1}) = c(c^3 + c + 1)(c + 1)^{-3}$, hence $c^3 + c + 1 = 0$, a contradiction. If $b = a + c$ then it follows that $c = \frac{a^2}{a + 1}$ and so $f(a) = \frac{a^3}{a + 1} = 0$, thus $a = 0$, a contradiction. If $b = a + c + 1$ then $c = a$ or $a = 1$, a contradiction. Thus $\theta_a(b) = ab = c$.
Now as $a$ is a root of $f$, we know that $a(ac + a(a + 1) + c + \frac ca) = 0$, and hence $\theta_a(c) = ac + a(a + 1) = c + \frac ca = c + b$. 
The result now follows from \lref{l:partialorth}.
\end{proof}

\section{Proof of the main results}\label{s:summary}

We are now in a position to prove our main results.

\begin{proof}[Proof of \tref{t:Ham3}]
There clearly cannot exist polynomials which have Hamming distance $3$
over $\F_2$. By \lref{l:polydiff}, if there existed polynomials $f, g
\in \F_q[x]$ with $H(f, g) = 3$ when $q \in \{5, 8\}$, then there
would exist orthomorphism polynomials of degree $q-3$ over these
fields, which we know is not the case from
\cite{shallue2013permutation}. It remains to justify the claim that
$f$ and $g$ exist for all prime powers $q = p^r \notin \{2, 5, 8\}$.

If $p \notin \{2, 5\}$ then the claim is true by \tref{non25}. If $p \in \{2, 5\}$ and $r$ is even then the claim follows from \tref{1mod3}. If $r$ is odd and $p=2$ then the claim follows from \tref{t:oddtwo}. If $q=5^r \geq 191$, then the claim follows from \tref{largeq}. The only remaining case is $q = 125$. Consider $\F_{125}$ as $\mathbb{Z}_5[y] / (y^3 + 3y + 3)$, and note that $y$ is a primitive element of $\F_{125}^*$. Let $a = y^2 \notin C_{0, 4}$ and $b = y^2 + 4 = y^{75} \notin C_{0, 4}$. Then $f \in \F_{125}[x]$ defined by $f(x) = (a - b)^{-1}x^5 - b(a - b)^{-1}x$ is an orthomorphism polynomial by a result of Niederreiter and Robinson \cite{niederreiter1982complete}. Furthermore $f$ satisfies $f(0) = 0$, $f(y^2) = y^{118} = 4y^2 + 3$ and $f(y^{118}) = y^{40} = 3y^2 + 3 = y^{118} - y^2$, and so we are done, by \lref{l:partialorth}.
\end{proof}

The only fields not having orthomorphisms at Hamming distance 3 are
$\F_2$, $\F_5$ and $\F_8$. There are no orthomorphisms at all over
$\F_2$. Over $\F_5$, it was noted in \cite{cavenagh2010number} that
the minimum Hamming distance between orthomorphisms is 4 (this
distance is achieved by $f=2x$ and $g=3x$). The minimum Hamming
distance between orthomorphisms over $\F_8$ is also $4$. To see this,
consider $f=ax$ and $g=\theta_a$ from \eref{e:orthchar2}, and apply
the logic behind \lref{l:polydiff}.

\begin{proof}[Proof of \tref{t:bndach}]
Orthomorphisms of degree $q-3$ do not exist when
$q \in \{2, 3, 5, 8\}$ as shown in \cite{shallue2013permutation}.
For all other prime powers $q$, existence of
orthomorphisms of degree $q-3$ follows by combining \tref{t:Ham3} with
\lref{l:polydiff}.
\end{proof}

Note that from \cite{shallue2013permutation}, the maximum reduced
degree of any orthomorphism polynomial over $\F_3$, $\F_5$ and $\F_8$
is respectively $1$, $1$ and $4$.

\begin{proof}[Proof of \tref{t:irreg}]
Eberhard, Manners and Mrazovi\'{c} \cite{EMM19} showed that any
abelian group of odd order $q$ has $(e^{-1/2}+o(1))q!^2q^{1-q}$
orthomorphisms. In particular, this is true for the additive group of
$\F_q$ (when $q$ is odd). However, the number of cyclotomic maps
of index $i$ over $\F_q$ is at most $q^i$, given that the map is
determined by the values of $a_1,\dots,a_i$ in
\eref{e:cyceqn}. Each such map has $q$
translations and the only relevant values of $i$ satisfy $1\le i<q/2$.
Hence the number of orthomorphisms over $\F_q$ that are not irregular is
at most $q^{q/2}q(q/2)=q^{q/2+2}/2$, which is asymptotically
insignificant compared to the total number of orthomorphisms.  We
conclude that for fields of odd characteristic, asymptotically almost
all orthomorphisms are irregular.

Next, suppose that $q=2^r>4$ and consider the orthomorphism
$\theta_a$ defined by \eref{e:orthchar2}. In \cite[Thm~5]{FW17} it was
shown that $\theta_a$ is irregular if $r$ is odd. However, examining
that proof reveals that $\theta_a$ will also be irregular for even $r$,
provided that the set $X_g=\{g+1,g+a,g+a+1\}$ is not a union of
cyclotomic cosets for any $g\in\F_q$. As $|X_g|=3$, the only
possible problem is that $X_g=C_{i,n}$ for some $i$, where $n=(q-1)/3$. Note that
$X_g$ contains two elements that differ by $1$ and the third element
differs from one of those two elements by $a$. Hence for each $i$ there
are at most 2 choices of $a$ that might allow $X_g=C_{i,n}$ to be satisfied for
some $g$.
Eliminating these choices for all $i$, we lose at most $2(q-1)/3<q-2$ choices
for $a$. Hence, we can pick a value for $a$ such that
$X_g\ne C_{i,n}$ for all $g\in\F_q$ and $0\le i<n$. In that case,
$\theta_a$ is irregular.

Finally, we consider the case when $7<q\not\equiv1\bmod3$. The
previous case handled $q=8$, so assume $q>8$.  Now, \tref{t:bndach}
ensures the existence of an orthomorphism $\theta$ of reduced degree
$q-3$.  Suppose that $T_g[\theta]$ is cyclotomic of index $i<q-1$ for
some $g\in\F_q$. Note that $T_g[\theta]$ also has reduced degree
$q-3$.  Hence, by \cite[Thm~1]{niederreiter2005cyclotomic} we must
have $\gcd(q-1,3)>1$, but that contradicts the fact that
$q\not\equiv1\bmod3$.
\end{proof}

\section{Concluding remarks}\label{s:conclude}

For each finite field we have established what the minimum distance between
two distinct orthomorphisms is, and what the largest degree of a reduced
orthomorphism polynomial is. 

A direction for further research would be to investigate the proportion of
orthomorphisms that have reduced degree $q-3$. It was shown in
\cite{MR1933625} that asymptotically almost all permutation
polynomials in $\F_q[x]$ have reduced degree $q-2$.  The data for
small fields presented in \cite{shallue2013permutation} is consistent
with orthomorphisms displaying a similar trend. We propose:

\begin{conj}\label{cj:mostq-3}
Asymptotically almost all orthomorphism polynomials in $\F_q[x]$ 
have reduced degree $q-3$, as $q\to\infty$.
\end{conj}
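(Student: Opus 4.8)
The plan is to reduce the conjecture to an \emph{equidistribution} statement about a single field-valued statistic of an orthomorphism, and then to establish that statement by adapting the Fourier-analytic enumeration of complete mappings. The first step is purely algebraic. Writing the reduced polynomial of $\theta$ via Lagrange interpolation as $f(x)=\sum_{a\in\F_q}\theta(a)\bigl(1-(x-a)^{q-1}\bigr)$, every coefficient of $x^k$ with $k>0$ comes from the term $-\theta(a)(x-a)^{q-1}$. By \tref{t:reddegorth} the coefficients of $x^{q-1}$ and $x^{q-2}$ vanish automatically for $q>3$, so $f$ has degree exactly $q-3$ precisely when the coefficient of $x^{q-3}$ is nonzero. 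Extracting it gives $-\binom{q-1}{2}\,S_2(\theta)$, where $S_2(\theta):=\sum_{a\in\F_q}a^2\theta(a)$, and $\binom{q-1}{2}\equiv1$ in $\F_q$ (in odd characteristic because $(q-1)(q-2)/2\equiv(-1)(-2)/2=1$, and in characteristic $2$ by Lucas's theorem). Thus, for \emph{every} prime power $q>3$, an orthomorphism $\theta$ has reduced degree $q-3$ if and only if $S_2(\theta)\ne0$, and the conjecture is equivalent to the assertion that the proportion of orthomorphisms with $S_2(\theta)=0$ tends to $0$.

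Let $N$ denote the number of orthomorphisms of $\F_q$. Using additive characters $\psi$ of $\F_q$, I would write
\[
\#\{\theta:\,S_2(\theta)=0\}=\frac1q\sum_{\psi}T_\psi,\qquad T_\psi:=\sum_{\theta\ \mathrm{orth}}\psi\bigl(S_2(\theta)\bigr),
\]
where the trivial character contributes the main term $N/q$. Since $N/q=o(N)$, it suffices to show $\max_{\psi\ne1}|T_\psi|=o(N)$, as then summing the $q-1$ nontrivial contributions gives a total error of order $o(N)$. As a guide and sanity check, the scaling map $D_c\colon\theta(x)\mapsto c^{-1}\theta(cx)$ sends orthomorphisms to orthomorphisms and satisfies $S_2(D_c\theta)=c^{-3}S_2(\theta)$; this forces the distribution of $S_2$ to be invariant under multiplication by nonzero cubes, which already shows the nonzero values of $S_2$ are equidistributed (exactly so when $q\not\equiv1\bmod3$). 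Crucially, however, this symmetry alone cannot bound the fibre $\{S_2=0\}$, since that fibre is a union of full $D_c$-orbits; genuine cancellation in the $T_\psi$ is therefore unavoidable.

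The heart of the argument, and its main obstacle, is the bound $|T_\psi|=o(N)$ for each nontrivial $\psi$. Here $T_\psi=\sum_{\sigma\in S_q}\bigl[\,x\mapsto\sigma(x)-x\ \text{is a bijection}\,\bigr]\,\psi\bigl(\sum_a a^2\sigma(a)\bigr)$ is a weighted count of complete mappings, and the plan is to run it through the Fourier-analytic machinery of Eberhard, Manners and Mrazovi\'c \cite{EMM19}, who evaluated $N$ asymptotically for odd $q$. In that framework the count is an average over the character group of a product, whose dominant contribution comes from near-trivial frequencies; inserting the weight $\psi(\sum_a a^2\sigma(a))$ twists one frequency by a fixed nontrivial character, displacing the maximising frequency and so making $T_\psi$ of strictly smaller order than $N$. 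Rather than re-evaluating $N$, I would compare $T_\psi$ and $N$ term-by-term within this expansion. This is the orthomorphism analogue of the permutation-polynomial result \cite{MR1933625}, where the coefficient of $x^{q-2}$ is $-\sum_a a\sigma(a)$ and equidistribution follows from permanent bounds for the matrix $\bigl[\psi(ab)\bigr]_{a,b}$; the complete-mapping constraint here adds a second combinatorial layer that forces one to use the full EMM apparatus in place of a single permanent estimate.

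I expect two difficulties beyond the core cancellation estimate. First, the set of orthomorphisms has no algebraic parametrisation, so off-the-shelf exponential-sum bounds do not apply and the character twist must be pushed through the EMM analysis with error terms controlled uniformly in $\psi$ (so that the sum over the $q-1$ nontrivial characters remains $o(N)$). Second, the EMM enumeration is stated for groups of odd order, so the even-characteristic case $q=2^r$ requires the analogous twisted count over the elementary abelian $2$-group; the reduction in the first paragraph is characteristic-free, so only this last analytic step would need to be redone for even $q$. Establishing the uniform cancellation in $T_\psi$ is where essentially all the new work lies.
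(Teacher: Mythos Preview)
The statement you are addressing is \cjref{cj:mostq-3}, which in the paper is explicitly a \emph{conjecture}, not a theorem. The paper offers no proof of it; it is listed in \sref{s:conclude} as a direction for future work, motivated by the analogous result \cite{MR1933625} for permutation polynomials and by the small-field data in \cite{shallue2013permutation}. So there is no ``paper's own proof'' to compare your attempt to.

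As to the substance of your proposal: your reduction is correct and useful. Extracting the coefficient of $x^{q-3}$ from the Lagrange form gives $-\binom{q-1}{2}S_2(\theta)$ with $S_2(\theta)=\sum_{a}a^2\theta(a)$, and the observation that $\binom{q-1}{2}\equiv1$ in $\F_q$ for every prime power $q>3$ cleanly turns the conjecture into the statement that $S_2$ is asymptotically never zero. The character-sum setup is also sound: with the $1/q$ prefactor, a uniform bound $\max_{\psi\ne1}|T_\psi|=o(N)$ really does suffice, since $\tfrac{1}{q}\sum_{\psi\ne1}|T_\psi|\le\max_{\psi\ne1}|T_\psi|$.

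However, what you have written is a strategy, not a proof. The decisive step, bounding $T_\psi$ for nontrivial $\psi$ by pushing the twist $\psi\bigl(\sum_a a^2\sigma(a)\bigr)$ through the Eberhard--Manners--Mrazovi\'c analysis, is asserted rather than carried out, and you yourself flag it as ``where essentially all the new work lies.'' That is an honest assessment, but it means the conjecture remains open after your proposal exactly as it does after the paper. Two concrete obstacles you should anticipate if you pursue this line: (i) the EMM argument is delicate and it is not obvious that a fixed nontrivial twist of one frequency degrades the main term by a factor tending to infinity rather than merely by a constant, which is what you need; (ii) as you note, the EMM enumeration is for odd order, so even $q$ would require a separate treatment of the elementary abelian $2$-group. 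Until the cancellation estimate is actually established, this remains a plausible plan of attack on an open problem rather than a proof.
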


On the subject of typical behaviour for orthomorphisms of large fields,
we showed that almost all orthomorphisms of fields of odd order are irregular.
We believe that fields of even order share this property.

\begin{conj}\label{cj:irreg}
Asymptotically almost all orthomorphisms are irregular.
\end{conj}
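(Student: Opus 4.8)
The odd-characteristic half is already contained in \tref{t:irreg}, so the plan is to establish the statement for fields of even order, $q=2^r$. First I would note that the upper bound on non-irregular orthomorphisms used in the proof of \tref{t:irreg} is entirely characteristic-free: a non-irregular orthomorphism is a translate of a cyclotomic map \eref{e:cyceqn} of some index $i$ with $1\le i<q/2$, there are at most $q^i$ cyclotomic maps of each such index and $q$ translations of each, so at most $q^{q/2+2}/2$ orthomorphisms over $\F_q$ fail to be irregular, no matter the characteristic. Hence it suffices to show that the \emph{total} number of orthomorphisms over $\F_{2^r}$ grows faster than $q^{q/2+2}$; in fact any lower bound of the shape $q^{(1/2+\epsilon)q}$ with $\epsilon>0$ fixed would close the case.

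The natural target is the count $(e^{-1/2}+o(1))\,q!^2q^{1-q}$ that \cite{EMM19} proved for abelian groups of odd order, since the additive group of $\F_{2^r}$ is the elementary abelian $2$-group $(\F_2)^r$, and the heuristic value $q!^2q^{1-q}\approx q^{q+1}e^{-2q}$ dwarfs $q^{q/2+2}$. One route is to rerun the exponential-sum argument of \cite{EMM19} over the character group of $(\F_2)^r$: the number of orthomorphisms (equivalently, complete mappings) can be opened as a character sum whose principal-character term should supply the expected main contribution. Crucially, for the present application the precise constant is irrelevant, so it would be enough to extract a \emph{lower} bound on this main term while merely controlling, rather than evaluating, the remaining contributions.

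Because only a crude bound is needed, a second and perhaps more robust route is combinatorial: an orthomorphism is exactly a transversal of the Cayley table $L_{x,y}=x+y$ of $(\F_2)^r$, and general lower bounds for the number of transversals of a group-based Latin square --- for instance via permanent inequalities of van der Waerden--Egorychev--Falikman and Schrijver type, or via the asymptotic transversal counts developed by Taranenko and by Eberhard --- ought already to exceed $q^{q/2+2}$ comfortably. I would try to turn one of these into a self-contained statement of the form $N(\F_{2^r})\ge q^{(1-o(1))q}$, which together with the first paragraph would finish the even-order case and hence the conjecture.

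I expect the even-order lower bound to be the main obstacle. The analysis of \cite{EMM19} is tailored to cyclic and odd-order groups, and the character group of $(\F_2)^r$ behaves quite differently: every nontrivial character is real of order two, so the cancellation that tames the error term in the odd-order circle method does not arise in the same fashion, and the off-diagonal contributions must be re-estimated from scratch. Avoiding the absolute count altogether --- say, by producing from each non-irregular orthomorphism a large, suitably disjoint family of irregular ones via local trades of the kind used in \sref{s:summary} --- runs into the dual difficulty of \emph{certifying} that the perturbed orthomorphisms are themselves irregular, which recreates the original problem. I would therefore regard the unconditional lower bound on the number of orthomorphisms in even characteristic as the crux of the argument.
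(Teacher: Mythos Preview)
There is nothing in the paper to compare against: \cjref{cj:irreg} is stated as an open conjecture in \sref{s:conclude}, not as a theorem, and the paper offers no proof or proof sketch for the even-characteristic half. What the paper \emph{does} prove is exactly the odd-characteristic statement inside \tref{t:irreg}, which you correctly invoke.

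Your diagnosis of where the difficulty lies is accurate. The counting argument from the proof of \tref{t:irreg} that bounds the number of non-irregular orthomorphisms by $q^{q/2+2}/2$ is indeed characteristic-free (for even $q$ one even gains a little, since $q-1$ is odd and so every proper index is at most $(q-1)/3$). Hence the entire content of the conjecture reduces, as you say, to a sufficiently strong lower bound on the total number of orthomorphisms of $(\F_2)^r$. But you do not actually supply such a bound: both routes you sketch---rerunning the circle-method count of \cite{EMM19} over the dual of an elementary abelian $2$-group, or extracting it from permanent/transversal inequalities---are programmes rather than arguments, and you yourself flag the structural obstacle (all nontrivial characters have order two) that makes the first route nonroutine. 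So the proposal is a correct reduction together with an honest admission that the reduced problem is the whole problem; it is not a proof, and the paper does not claim one either.
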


A related open question is to establish what is the largest field without
irregular orthomorphisms. We know from \tref{t:irreg} that the answer is
a field of odd order (it may well be $\F_7$).

\bibliographystyle{plain}
\bibliography{orth_references}

\begin{thebibliography}{10}

\bibitem{cavenagh2009completing}
Nicholas~J. Cavenagh, Carlo H{\"a}m{\"a}l{\"a}inen, and Adrian~M. Nelson.
\newblock On completing three cyclically generated transversals to a latin
  square.
\newblock {\em Finite Fields Appl.}, 15:294--303, 2009.

\bibitem{cavenagh2010number}
Nicholas~J. Cavenagh and Ian~M. Wanless.
\newblock On the number of transversals in cayley tables of cyclic groups.
\newblock {\em Disc. Appl. Math.}, 158:136--146, 2010.

\bibitem{EMM19}
Sean Eberhard, Freddie Manners, and Rudi Mrazovi\'{c}.
\newblock Additive triples of bijections, or the toroidal semiqueens problem.
\newblock {\em J. Eur. Math. Soc.}, 21:441--463, 2019.

\bibitem{evans2018orthogonal}
Anthony~B. Evans.
\newblock {\em Orthogonal Latin squares based on groups}, volume~57.
\newblock Springer, 2018.

\bibitem{FW17}
David Fear and Ian~M. Wanless.
\newblock Existence results for cyclotomic orthomorphisms.
\newblock {\em J. Algebr. Comb.}, 46:1--14, 2017.

\bibitem{MR1933625}
Sergei Konyagin and Francesco Pappalardi.
\newblock Enumerating permutation polynomials over finite fields by degree.
\newblock {\em Finite Fields Appl.}, 8:548--543, 2002.

\bibitem{niederreiter1982complete}
Harald Niederreiter and Karl~H. Robinson.
\newblock Complete mappings of finite fields.
\newblock {\em J. Aust. Math. Soc.}, 33:197--212, 1982.

\bibitem{niederreiter2005cyclotomic}
Harald Niederreiter and Arne Winterhof.
\newblock Cyclotomic {R}-orthomorphisms of finite fields.
\newblock {\em Discrete Math.}, 295:161--171, 2005.

\bibitem{shallue2013permutation}
Christopher~J. Shallue and Ian~M. Wanless.
\newblock Permutation polynomials and orthomorphism polynomials of degree six.
\newblock {\em Finite Fields Appl.}, 20:84--92, 2013.

\bibitem{wan1986problem}
Daqing Wan.
\newblock On a problem of {N}iederreiter and {R}obinson about finite fields.
\newblock {\em J. Aust. Math. Soc.}, 41:336--338, 1986.

\bibitem{wanless2004diagonally}
Ian~M. Wanless.
\newblock Diagonally cyclic latin squares.
\newblock {\em European J. Combin.}, 25:393--413, 2004.

\bibitem{williams1975note}
Kenneth~S. Williams.
\newblock Note on cubics over {${G}{F}(2^n)$} and {${G}{F}(3^n)$}.
\newblock {\em J. Number Theory}, 7:361--365, 1975.

\end{thebibliography}

\end{document}